\tikzset{> =stealth}
\tikzset{normalHead/.tip={Triangle[open,angle=60:4pt]},}
\tikzset{normalTail/.tip={Triangle[reversed,open,angle=60:4pt]},}
\DeclareRobustCommand\triangleHeadArrow{
 \mathrel{\raisebox{0.5pt}{\tikz{\draw[-{Triangle[open,angle=60:5pt]}] (0,0) -- (0.4,0);}}}
}
\DeclareRobustCommand\triangleTailArrow{
 \mathrel{\tikz{\draw[>={Triangle[open,angle=60:4.5pt]}, >-{To[length=3pt]}] (0,0) -- (0.425,0);}}
}
\theoremstyle{plain}
\newtheorem{theorem}{Theorem}[section]
\newtheorem{proposition}[theorem]{Proposition}
\newtheorem{corollary}[theorem]{Corollary}
\theoremstyle{definition}
\newtheorem{definition}[theorem]{Definition}
\theoremstyle{remark}
\newtheorem{remark}[theorem]{Remark}
\renewcommand{\epsilon}{\varepsilon}
\renewcommand{\phi}{\varphi}
\renewcommand{\O}{\mathcal{O}}
\newcommand{\op}{^\mathrm{op}}
\newcommand{\id}{\mathrm{id}}
\mathchardef\mhyphen="2D
\newcommand{\Frm}{\mathrm{Frm}}
\newcommand{\RFrm}{\mathrm{Frm}_\wedge}
\newcommand{\Set}{\mathrm{Set}}
\newcommand{\Rel}{\mathrm{Rel}}
\newcommand{\Grp}{\mathrm{Grp}}
\newcommand{\SLat}{{\wedge}\mhyphen\mathrm{SLat}}
\newcommand{\Hom}{\mathrm{Hom}}
\newcommand{\Ext}{\mathrm{Ext}}
\newcommand{\SpltExt}{\mathrm{SplExt}}
\newcommand{\GenExt}{\mathrm{AdjExt}}
\newcommand{\ExtCat}{\overline{\Ext}}
\newcommand{\GenExtCat}{\overline{\GenExt}}
\DeclareMathOperator{\Aut}{Aut}
\DeclareMathOperator{\Gl}{Gl}
\newcommand{\splitext}[6]{%
\tikz[baseline]{
\newdimen{\mylabelwidth}
\newdimen{\skipwidth}
\node[anchor=base] (A) {\hspace*{\dimexpr0.5pt-\pgfkeysvalueof{/pgf/inner xsep}}${#1}$};
\settowidth{\mylabelwidth}{\pgfinterruptpicture {$#2$} \endpgfinterruptpicture}
\pgfmathsetlength{\skipwidth}{max(\mylabelwidth,10pt)}
\node[right] (B) at ([xshift=\skipwidth+12pt]A.east) {${#3}$};
\settowidth{\mylabelwidth}{\pgfinterruptpicture {$#4$} \endpgfinterruptpicture}
\settowidth{\skipwidth}{\pgfinterruptpicture {$#5$} \endpgfinterruptpicture}
\pgfmathsetlength{\skipwidth}{max(\skipwidth,\mylabelwidth,10pt)}
\node[right] (C) at ([xshift=\skipwidth+12pt]B.east) {${#6}$\hspace*{\dimexpr0.5pt-\pgfkeysvalueof{/pgf/inner xsep}}};
\draw[normalTail->] (A) to node [above] {${#2}$} (B);
\draw[transform canvas={yshift=0.5ex},-normalHead] (B) to node [above] {${#4}$} (C);
\draw[transform canvas={yshift=-0.5ex},->] (C) to node [below] {${#5}$} (B);
}}
\newcommand{\normalext}[5]{%
\tikz[baseline]{
\newdimen{\mylabelwidth}
\newdimen{\skipwidth}
\node[anchor=base] (A) {\hspace*{\dimexpr0.5pt-\pgfkeysvalueof{/pgf/inner xsep}}${#1}$};
\settowidth{\mylabelwidth}{\pgfinterruptpicture {$#2$} \endpgfinterruptpicture}
\pgfmathsetlength{\skipwidth}{max(\mylabelwidth,12pt)}
\node[right] (B) at ([xshift=\skipwidth+10pt]A.east) {${#3}$};
\settowidth{\mylabelwidth}{\pgfinterruptpicture {$#4$} \endpgfinterruptpicture}
\pgfmathsetlength{\skipwidth}{max(\mylabelwidth,10pt)}
\node[right] (C) at ([xshift=\skipwidth+10pt]B.east) {${#5}$\hspace*{\dimexpr0.5pt-\pgfkeysvalueof{/pgf/inner xsep}}};
\draw[normalTail->] (A) to node [above] {${#2}$} (B);
\draw[-normalHead] (B) to node [above] {${#4}$} (C);
}}
\title{Artin Glueings of Frames as Semidirect Products}
\author[P. F. Faul]{Peter F. Faul}
\address{Department of Pure Mathematics and Statistical Sciences\\ University of Cambridge}
\email{peter@faul.io}
\author[G. R. Manuell]{Graham R. Manuell}
\address{School of Mathematics\\ University of Edinburgh}
\email{graham@manuell.me}
\date{December 2019}
\subjclass[2010]{18G50, 06D22, 54B15, 18B30.}
\keywords{Artin gluing \and split extension \and S-protomodular \and Schreier extension \and locale} 
\begin{document}

\maketitle
\thispagestyle{empty}

\begin{abstract}
Artin glueings provide a way to reconstruct a frame from a closed sublocale and its open complement.
We show that Artin glueings can be described as the weakly Schreier split extensions in the category of frames with finite-meet preserving maps.
These extensions correspond to meet-semilattice homomorphisms between frames, yielding an extension bifunctor.
Finally, we discuss Baer sums, the induced order structure on extensions
and the failure of the split short five lemma.
\end{abstract}

\section{Introduction}

Let $H = (|H|, \O H)$ and $N = (|N|, \O N)$ be topological spaces.
We might ask which topological spaces $G$ have $H$ as an open subspace and $N$ as its closed complement. This is solved by the so-called \emph{Artin glueing} construction \cite{sga4vol1,wraith1974glueing}.
Let us briefly describe the intuition behind this construction.

In such a situation it is clear that $|G| = |N| \sqcup |H|$. Furthermore, each open $U$ in $G$ corresponds to a pair $(U_N,U_H)$ where $U_N = U \cap N \in \O N$ and $U_H = U \cap H \in \O H$. This gives an alternative description of $\O G$ as the frame $L_G$ of such pairs with the meet and join operations corresponding to componentwise intersection and union respectively.

Given such a space $G$, the pairs occurring in $L_G$ are determined by a finite-meet preserving map $\alpha \colon \O H \to \O N$ which sends each open $U \in \O H$ to the largest open $V \in \O N$ such that $(V,U) \in L_G$. A pair $(V,U)$ belongs to $L_G$ if and only if $V \subseteq \alpha(U)$. In fact, any meet-preserving map $\alpha$ will determine such a space $G$ in this way. We call the resulting space the Artin glueing of $\alpha$.

The above discussion deals only with the lattices of open sets of the topological spaces and the construction works equally well for frames. The aim of this paper is to explore the commonalities between this construction and the semidirect product of groups.

Let us recall some basic properties of semidirect products. Given a group homomorphism $\alpha \colon H \to \Aut(N)$, we can construct a group $G$ satisfying:
\begin{enumerate}
\item $H \leq G$ and $N \triangleleft G$,
\item $H \vee N = G$,
\item $H \cap N = \{e\}$.
\end{enumerate}

We see here a vague analogy between the semidirect products of groups and the Artin glueings of frames. In both cases we have objects $H$ and $N$ which we want to embed as complemented `subobjects' (sublocales in the frame case) of some other object, with $N$ normal in the group case and closed in the frame case. In both cases these constructions are entirely determined by certain structure-preserving maps involving $N$ and $H$.

In order to make this analogy precise, we look at the characterisation of semidirect products of groups as the solutions to the split extension problem. A split extension of groups is a diagram of the form \[\splitext{N}{k}{G}{e}{s}{H},\]
where $k$ is the kernel of $e$, $e$ is the cokernel of $k$, and $s$ is a section of $e$. Here $G$ will always be a semidirect product of $H$ and $N$, and the maps $k$ and $s$ will be the appropriate inclusions into the semidirect product. Furthermore, for each group $N$ there is a functor $\SpltExt(-,N) \colon \Grp\op \to \Set$ which sends a group $H$ to the set of split extensions of $H$ by $N$. This functor is naturally isomorphic to $\Hom(-,\Aut(N))$. For more details on this functor, see
\cite{borceux2005internal,borceux2005representability}.

Split extensions of groups are very well behaved and the notion of \emph{pointed protomodular category} (see \cite{bourn1998protomodularity, borceux2004protomodular}) provides a general setting in which they can be studied. Sometimes, however, only some of the split extensions in a category are well behaved and this motivates the more general idea of $\mathcal{S}$-protomodularity \cite{bourn2015Sprotomodular}, where $\mathcal{S}$ can be thought of as a collection of split extensions.

It is in this vein that we study Artin glueings. It is not possible to talk about extensions in the usual category of frames (or locales), as without zero morphisms it does not make sense to talk about kernels and cokernels. Instead we move to the category $\RFrm$ which has frames as objects and finite-meet preserving maps as morphisms. Both $\RFrm$ and $\Grp$ are full subcategories of the category of monoids. The split extensions we consider are related to the \emph{Schreier} split extensions from the study of monoids (see \cite{martins2013semidirect,bourn2015Sprotomodular}).

The relationship between $\Frm$ and $\RFrm$ is similar to the relationship between the category $\Set$ of sets and functions and the category $\Rel$ of sets and relations. In particular, $\RFrm$ is order-enriched and we can find the frame homomorphisms inside it as the left adjoints. In this way $\RFrm$ provides a proarrow equipment for $\Frm$. This category has been used alongside glueings in \cite{niefield2012glueing}.
The category $\RFrm$ can also be thought of as the category of injective meet-semilattices \cite{bruns1970injective}, though we are less sure of the implications of this.

In this paper we concern ourselves with the collection of split extensions of the form described above, but where $k$ and $s$ are required to satisfy a Schreier-type condition (or equivalently, where $s$ is required to be right adjoint to $e$). We find that $G$ will always be an Artin glueing of $H$ and $N$ determined by the map $k^*s$. As one might expect, there is a family of functors $\GenExt(-,N)$ here too, but now these extend to a bifunctor $\GenExt$, which is naturally isomorphic to $\Hom \colon \RFrm\op \times \RFrm \to \Set$. The fact that hom-sets have a natural meet-semilattice structure gives a notion of Baer sum of the extensions.
We also study the induced order structure on the extensions and the failure of the split short five lemma.
These results will be extended to the topos case in later paper.

\section{An extension problem in \texorpdfstring{$\RFrm$}{Frm∧}}

\subsection{Adjoint extensions}\label{adjoint_extensions}

The category $\RFrm$ of frames and finite-meet preserving maps is enriched over meet-semilattices and so between any two frames $L$ and $M$ there is a largest meet-preserving map. This map is the constant $1$ map, which sends each element of $L$ to the top element of $M$. It is apparent that composing with this map on either side again yields a constant $1$ map and so we see that these maps are the zero morphisms of our category. Due to this somewhat unfortunate conflict of terminology we use $\top_{L,M}$ to refer to the zero morphism between $L$ and $M$ or just $\top$ when its meaning is unambiguous.

We can now define the \emph{kernel} of a morphism $f$ as the equaliser of $f$ and $\top$ and the \emph{cokernel} of $f$ as the coequaliser of $f$ and $\top$.

\begin{definition}
A diagram of the form
\[\splitext{N}{k}{G}{e}{s}{H}\]
is called a \emph{split extension} if $k$ is the kernel of $e$, $e$ is the cokernel of $k$ and $s$ is a section of $e$. It is called an \emph{adjoint extension} if furthermore, $s$ is  right adjoint to $e$.
\end{definition}

Kernels do not always exist in this category; however, cokernels always do exist in the form described below.

\begin{proposition}\label{cokernelsexist}
Let $f \colon N \to G$ be a morphism in $\RFrm$ and let $u = f(0)$. The cokernel of $f$ is given by $e \colon G \triangleHeadArrow {\downarrow} u$, where $e(x) = x \wedge u$. Furthermore, $e$ has a right adjoint section given by $e_*(y) = y^u$.
\end{proposition}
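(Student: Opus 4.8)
The plan is to first pin down what the cokernel is \emph{supposed} to do. Since $f$ preserves finite meets it is monotone, so $u = f(0) \le f(n)$ for every $n \in N$. Hence a morphism $g \colon G \to H$ in $\RFrm$ coequalises $f$ and $\top_{N,G}$ — that is, $g(f(n)) = g(1) = 1$ for all $n$ — if and only if $g(u) = 1$: the ``only if'' is the case $n = 0$, and the ``if'' follows because $u \le f(n)$ forces $1 = g(u) \le g(f(n)) \le 1$. So the cokernel of $f$ is precisely the universal morphism out of $G$ sending $u$ to $1$.

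Next I would verify that $e$ has this universal property. First, ${\downarrow}u$ is a frame: finite meets and arbitrary joins are computed as in $G$ (a join of elements below $u$ is below $u$), the frame distributive law is inherited, and the top is $u$. The map $e(x) = x \wedge u$ preserves finite meets (using $u \wedge u = u$) and sends $1$ to $u = 1_{{\downarrow}u}$, so it is a morphism of $\RFrm$, and plainly $e(u) = 1_{{\downarrow}u}$. Given any $g \colon G \to H$ with $g(u) = 1$, set $\bar g = g|_{{\downarrow}u} \colon {\downarrow}u \to H$; it preserves finite meets and the top, and $\bar g(e(x)) = g(x \wedge u) = g(x) \wedge g(u) = g(x)$, so $\bar g e = g$. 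Uniqueness is immediate since $e$ is surjective ($e(y) = y$ for $y \le u$). Thus $e$ is the cokernel.

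Finally, for the right adjoint section I would take $e_*(y) = y^u = \bigvee\{z \in G : z \wedge u \le y\}$, the relative pseudocomplement $u \to y$, which exists because the frame $G$ is a complete Heyting algebra. The frame distributive law is exactly the Galois connection $x \wedge u \le y \iff x \le y^u$, so $e_* $ is right adjoint to $e$. As a right adjoint it preserves all meets, in particular finite ones, and $e_*(1_{{\downarrow}u}) = u \to u = 1_G$, so $e_* \in \RFrm$; and it is a section of $e$ because $e(e_*(y)) = u \wedge (u \to y) = u \wedge y = y$, the last step using $y \le u$.

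I do not expect a serious obstacle: the only real idea is the observation that $f(0)$ is the least element of the image of $f$, and the rest is standard Heyting-algebra manipulation. The point to stay careful about is that we are in $\RFrm$, not $\Frm$, so $e_*$ need not — and does not — preserve joins; it suffices that it preserves finite meets, which it does because it is a right adjoint.
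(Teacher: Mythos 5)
Your proof is correct and follows essentially the same route as the paper: verify the universal property of the coequaliser of $f$ and $\top$ directly, and obtain the splitting from the Heyting implication $y^u$ via the adjunction $x \wedge u \le y \iff x \le y^u$. The only cosmetic differences are that you build the factoring map explicitly as the restriction $g|_{{\downarrow}u}$ (the paper instead notes $g$ is constant on the fibres of the surjection $e$) and that you spell out the frame facts the paper cites as well known.
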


\begin{proof}
The right adjoint to $e$ exists by well-known properties of frames. Since $e$ is surjective, $e_*$ splits $e$ by general properties of adjoints.

Clearly $e$ composes with $f$ to give $\top$ and so we need only check that it satisfies the universal property.
Suppose $g \colon G \to X$ composes with $f$ to give $\top_{N,X}$. In order to show that $e$ is the cokernel of $f$ we must show that $g$ factors through $e$ to give a unique map. Uniqueness is automatic, because $e$ is epic.

\begin{center}
   \begin{tikzpicture}[node distance=1.8cm, auto]
    \node (A) {$G$};
    \node (B) [right of=A] {${\downarrow}u$};
    \node (C) [below of=B] {$X$};
    \node(D) [left of=A] {$N$};
    \draw[transform canvas={yshift=0.5ex},->>] (A) to node {$e$} (B);
    \draw[->] (A) to node [swap]{$g$} (C);
    \draw[transform canvas={yshift=-0.5ex},->] (B) to node {$e_*$} (A);
    \draw[dashed,->] (B) to (C);
    \draw[->] (D) to node {$f$} (A);
   \end{tikzpicture}
\end{center}

To show that the meet-semilattice homomorphism $g$ factors through the surjection $e$, it is enough to show that $g(x) = g(y)$ whenever $e(x) = e(y)$. But if $e(x) = e(y)$, then $x \wedge f(0) = y \wedge f(0)$ and so $g(x) = g(x) \wedge 1 = g(x) \wedge g(f(0)) = g(x \wedge f(0)) = g(y \wedge f(0))$, which equals $g(y)$ by running the same argument in reverse.
\end{proof}

\begin{definition}
We say a morphism is a \emph{normal epimorphism} if it occurs as the cokernel of some morphism.
Dually, a monomorphism is a \emph{normal monomorphism} if it occurs as the kernel of some morphism.
\end{definition}

\Cref{cokernelsexist} shows that every normal epimorphism is of the form
$- \wedge u \colon G \triangleHeadArrow {\downarrow}u$. Conversely, such a morphism is always a normal epimorphism as it is clearly seen to be the cokernel of the inclusion of ${\uparrow}u \subseteq G$.
Note that normal epimorphisms in $\RFrm$ are precisely the open frame quotients, which accords well with the idea that $H$ should be an open sublocale of the glueing.

While $\RFrm$ does not possess all kernels, this is not a problem for working with extensions, since kernels of normal epimorphisms always do exist.
\begin{proposition}\label{kernelsexist}
Let $e \colon G \triangleHeadArrow {\downarrow}u$ be a normal epimorphism in $\RFrm$. The kernel of $e$ is given by $k \colon {\uparrow}u \triangleTailArrow G$, where $k(x) = x$. The map $k$ has a left adjoint $k^*(x) = x \vee u$ which preserves finite meets.
\end{proposition}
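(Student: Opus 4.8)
The plan is to verify directly that the inclusion $k \colon {\uparrow}u \hookrightarrow G$ equalises $e$ and $\top_{G, {\downarrow}u}$ and satisfies the relevant universal property, and then to exhibit the left adjoint and check it preserves finite meets. First I would observe that ${\uparrow}u$ is a subframe-like structure: it is closed under arbitrary joins computed in $G$ once we reinterpret its bottom as $u$, and it is closed under finite meets (as $u$ is below everything in it), so the inclusion $k$ is a morphism of $\RFrm$ with $k(0_{{\uparrow}u}) = k(u) = u$. Composing with $e$ gives $e(k(x)) = x \wedge u = u$ for all $x \in {\uparrow}u$, which is the top element of ${\downarrow}u$; hence $e \circ k = \top$.

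Next I would check the universal property of the kernel. Suppose $g \colon X \to G$ is a morphism with $e \circ g = \top_{X, {\downarrow}u}$, i.e. $g(x) \wedge u = u$ for all $x \in X$, which says exactly that $g(x) \in {\uparrow}u$. Thus $g$ factors through $k$, and the factorisation is unique because $k$ is monic (it is injective). This establishes that $k$ is the kernel of $e$.

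For the adjunction, I would define $k^*(x) = x \vee u$ and verify the adjunction inequalities: $k^*(k(y)) = y \vee u = y$ for $y \in {\uparrow}u$ (so the counit is an equality), and $k(k^*(x)) = x \vee u \geq x$ (so the unit is the obvious inequality). This gives $k^* \dashv k$ directly. The one genuine point requiring care — and the step I expect to be the main obstacle — is showing that $k^*(x) = x \vee u$ preserves \emph{finite} meets, i.e. that $(x \wedge y) \vee u = (x \vee u) \wedge (y \vee u)$ for all $x, y \in G$. This is not the frame distributive law in general; it is the statement that $u$ is a \emph{meet-complemented} or ``nuclear-type'' element. However, it follows from the infinite distributive law in $G$ precisely because joining with $u$ is the open quotient map onto ${\uparrow}u$ composed with... more carefully, one uses that $- \vee u$ is right adjoint to nothing in general, so instead I would argue: the map $j(x) = x \vee u$ is a prenucleus, and more to the point $(x \vee u) \wedge (y \vee u) = (x \wedge y) \vee (x \wedge u) \vee (u \wedge y) \vee (u \wedge u) = (x \wedge y) \vee (x \wedge u) \vee (y \wedge u) \vee u = (x \wedge y) \vee u$, where the last equality holds because $x \wedge u \leq u$ and $y \wedge u \leq u$. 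Preservation of $1$ is immediate since $1 \vee u = 1$, and preservation of the empty meet plus binary meets gives all finite meets. This computation, once unwound via finite distributivity in $G$, is the crux; everything else is formal.
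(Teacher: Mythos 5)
Your proof is correct and follows essentially the same route as the paper: verify that the inclusion $k$ equalises $e$ and $\top$, establish the universal property from the observation that $e(g(x)) = 1$ forces $g(x) \in {\uparrow}u$, and exhibit $k^*(x) = x \vee u$ as left adjoint (the paper leaves the meet-preservation check implicit, which you spell out). Your brief worry that $(x \wedge y) \vee u = (x \vee u) \wedge (y \vee u)$ might need something beyond distributivity is unfounded --- it holds in any distributive lattice, as your own final computation shows --- so the digression about ``nuclear-type'' elements can simply be deleted.
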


\begin{proof}
Let $f \colon X \to G$ compose with $e$ to give $\top_{X,H}$. The only elements sent by $e$ to 1 lie in ${\uparrow}u$ and so the image of $f$ is contained in ${\uparrow}u$.
The restriction of $f$ to ${\uparrow} u$ shows the existence condition for the universal property, while uniqueness follows since $k$ is monic.
It is then easy to see that $x \mapsto x \vee u$ provides a left adjoint to $k$.
\end{proof}
It is well known that every normal monomorphism is the kernel of its cokernel. Thus the previous result implies that a map is a normal monomorphism if and only if it is of the form ${\uparrow}u \hookrightarrow G$.
In other words, the normal monomorphisms in $\RFrm$ are precisely the right adjoints of closed frame quotients.

We now have a good understanding of both the kernel and cokernel maps in a split extension. It is only the splitting $s$ that remains mysterious. We will need to impose some conditions on the splitting in order to obtain a well-behaved theory.

Notice that for a split extension of groups, every element of $G$ is of the form $k(n)s(h)$ for some $n \in N$ and $h \in H$. For split extensions of monoids this condition does not hold automatically, but when assumed explicitly gives rise to the class of \emph{weakly Schreier} extensions \cite{bourn2015partialMaltsev,faul2019weaklySchreier}.
This is the condition we will impose --- that is, we assume that every element of the frame $G$ is of the form $k(n) \wedge s(h)$.
In fact, as we will see below, there is a \emph{canonical} choice of $n$ and $h$, since $h$ is uniquely determined and we may choose $n$ to be as large as possible.
The following result shows that under this condition, the splitting is uniquely determined by $e$.

\begin{proposition}\label{adjointsplitting}
A split extension \splitext{N}{k}{G}{e}{s}{H} in $\RFrm$
is weakly Schreier if and only if it is an adjoint extension.
\end{proposition}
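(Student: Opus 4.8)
The proposition is a biconditional, so I would prove the two implications separately. Throughout, I will use the concrete descriptions from \Cref{cokernelsexist} and \Cref{kernelsexist}: we may take $G$ to come with $e\colon G\triangleHeadArrow{\downarrow}u$ given by $e(x)=x\wedge u$ (where $u=k(0)$) and $k\colon{\uparrow}u\triangleTailArrow G$ the inclusion, with $k^*(x)=x\vee u$. The splitting $s\colon H\to G$ satisfies $e\circ s=\id$. The key observation to keep in mind is that $e$ always has a right adjoint $e_*$ (namely $e_*(y)=y^u$ in the notation of \Cref{cokernelsexist}), so "$s$ is right adjoint to $e$" is equivalent to "$s=e_*$", i.e. the adjoint condition pins down the splitting completely.

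**The easy direction ($\Leftarrow$).** Suppose the extension is adjoint, so $s=e_*$. I want to show every $x\in G$ can be written $k(n)\wedge s(h)$. The natural guess is $n=k^*(x)=x\vee u$ and $h=e(x)=x\wedge u$; then $k(n)\wedge s(h)=(x\vee u)\wedge e_*(x\wedge u)$. Since $e_*$ is right adjoint to $e$, the counit gives $e e_*\le\id$ and the unit gives $\id\le e_*e$, so in particular $x\le e_* e(x)=e_*(x\wedge u)$. Combined with $x\le x\vee u$ this yields $x\le (x\vee u)\wedge e_*(x\wedge u)$. For the reverse inequality one computes $e\bigl((x\vee u)\wedge e_*(x\wedge u)\bigr)=e(x\vee u)\wedge e e_*(x\wedge u)$; since $e$ preserves meets and $e(u)=u\wedge u=u$ acts as the top of ${\downarrow}u$... wait, one must be careful that $e(x\vee u)=e(x)\vee e(u)$ need not hold as $e$ need not preserve joins — but $e(x\vee u)=(x\vee u)\wedge u=u\wedge u=u=1_{{\downarrow}u}$, so this term is just the top, and $e e_*(x\wedge u)\le x\wedge u$. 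Hence $e$ of the right-hand side is $\le x\wedge u=e(x)$, and meeting both sides with $u$ appropriately, together with the fact that $k^*$ applied to both sides agrees, should force equality. I expect the cleanest argument is: show $e$ and $k^*$ jointly reflect the order (if $e(a)\le e(b)$ and $k^*(a)\le k^*(b)$ then $a\le b$ — this is essentially because $a=a\wedge(a\vee u)$ and one can reconstruct $a$ from $a\wedge u$ and $a\vee u$ in a frame), and then check the $e$- and $k^*$-images of $(x\vee u)\wedge e_*(x\wedge u)$ and $x$ agree.

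**The harder direction ($\Rightarrow$).** Now suppose the extension is weakly Schreier: every $x\in G$ equals $k(n)\wedge s(h)$ for some $n\in{\uparrow}u$, $h\in H$. I must show $s$ is right adjoint to $e$, equivalently $s=e_*$, equivalently $e(y)\le x\iff y\le s(x)$ for all $y\in G$, $x\in H$. One direction, $y\le s(x)\implies e(y)\le e s(x)=x$, is immediate since $e$ is order-preserving (meet-preserving maps are monotone). For the converse, suppose $e(y)\le x$; I want $y\le s(x)$. Write $y=k(n)\wedge s(h)$. Applying $e$ and using $e k=\top_{N,H}$ (so $e k(n)=1$) gives $e(y)=ek(n)\wedge es(h)=1\wedge h=h$, so $h=e(y)\le x$. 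Then $y=k(n)\wedge s(h)\le s(h)\le s(x)$ by monotonicity of $s$. That completes it — but I should double-check the step $e(k(n)\wedge s(h))=ek(n)\wedge es(h)$, which is fine since $e$ preserves finite meets, and $ek(n)=k(n)\wedge u$; since $k(n)\in{\uparrow}u$ we have $k(n)\wedge u=u=1_{{\downarrow}u}$, good. So actually this direction is short once the weakly Schreier decomposition is in hand.

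**What's really the obstacle.** The genuinely fiddly part is the $(\Leftarrow)$ direction: verifying that the canonical candidate $k(k^*x)\wedge s(ex)$ recovers $x$ when $s=e_*$. The subtlety is that $e$ does not preserve joins, so one cannot naively push $e$ through $x\vee u$; the resolution is to note $e(x\vee u)$ is simply the top of ${\downarrow}u$, making that meetand harmless, and then to rely on a small lemma that in a frame an element is determined by its meet and join with a fixed element $u$ (concretely, $x=(x\vee u)\wedge(x\vee x) $... more precisely $x = (x\vee u)\wedge\bigl(\text{something in }{\downarrow}u\bigr)$ via the pullback ${\uparrow}u\times_{G}{\downarrow}u$-style reconstruction). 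I would isolate this reconstruction fact as the crux and prove it first, then both directions fall out quickly. I'd also remark in passing that this gives the promised canonical choice: $h=e(x)$ is forced, and $n=k^*(x)$ is the largest admissible $n$, matching the discussion preceding the proposition.
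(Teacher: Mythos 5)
Your proof is correct, and the overall skeleton matches the paper's: the same canonical decomposition $n = k^*(g)$, $h = e(g)$ drives both directions, and your argument for ``weakly Schreier $\Rightarrow$ adjoint'' (apply $e$ to $g = k(n)\wedge s(h)$, deduce $h = e(g)$, conclude $g \le se(g)$, equivalently verify the adjunction inequalities) is essentially the paper's. Where you genuinely diverge is in verifying that $k(k^*x)\wedge e_*(e x) = x$ when $s = e_*$. The paper does this by brute Heyting-algebra computation: writing $e_*(y) = y^u$, it simplifies $(x\vee u)\wedge(x\wedge u)^u = (x\vee u)\wedge x^u$ and then distributes to get $(x\wedge x^u)\vee(u\wedge x^u) = x$, using $u\wedge x^u \le x$. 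You instead avoid touching the implication explicitly: you use only the unit/counit of $e\dashv e_*$ together with the lemma that $a\wedge u\le b\wedge u$ and $a\vee u\le b\vee u$ jointly imply $a\le b$, i.e.\ that $e$ and $k^*$ jointly reflect the order. That lemma is correct and is one line of distributivity ($a = a\wedge(a\vee u)\le a\wedge(b\vee u) = (a\wedge b)\vee(a\wedge u)\le b$), so both proofs ultimately lean on the frame distributive law, but yours is slightly more structural and would transfer to any setting where $-\wedge u$ and $-\vee u$ jointly reflect order, while the paper's is more self-contained at the cost of explicit $(-)^u$ manipulations. One small slip to fix when you write it up: your parenthetical ``$x = (x\vee u)\wedge(\text{something in }{\downarrow}u)$'' cannot be right as stated, since such a meet lies below $u$; the correct crux is exactly the joint order-reflection statement you give, not a reconstruction with a factor from ${\downarrow}u$ (the factor must be $e_*(x\wedge u)$, which generally lies above $u$'s complement region, not below $u$).
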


\begin{proof}
By \cref{cokernelsexist,kernelsexist} we may take $N$ to be of the form ${\uparrow} u$ and $H$ to be ${\downarrow} u$ for some element $u \in G$. Then $k(x) = x$ and $e(x) = x \wedge u$. 

Suppose the extension is weakly Schreier and consider an element $g \in G$. By assumption, $g = k(n) \wedge s(h)$ for some $n \in N$ and some $h \in H$. Then $e(g) = ek(n) \wedge es(h) = 1 \wedge h = h$. Thus, $g = k(n) \wedge se(g)$. In particular, $g \le se(g)$ and so $\id_G \le se$. But $es = \id_H \le \id_H$ and so $s$ is right adjoint to $e$ as required.

For the other direction suppose $s$ is the right adjoint of $e$ so that $s(x) = e_*(x) = x^u$. We must show that each element of $g \in G$ can be expressed as $k(n) \wedge e_*(h)$ for some $n \in N$ and $h \in H$. By the above we may take $h = e(g)$, while $k^*(g)$ is the most natural candidate for $n$. Taking the meet yields $kk^*(g) \wedge e_*e(g) = (g \vee u) \wedge (g \wedge u)^u = (g \vee u) \wedge g^u \wedge u^u = (g \vee u) \wedge g^u = (g \wedge g^u) \vee (u \wedge g^u) = g \vee (u \wedge g^u)$. But $u \wedge g^u \le g$ and thus $kk^*(g) \wedge e_*e(g) = g$ as required.
\end{proof}

This means that all the information of a weakly Schreier extension \splitext{N}{k}{G}{e}{s}{H} is contained in the normal epi $e$, since $k$ can be recovered as its kernel and $s$ as its right adjoint. Since every normal epi gives rise to a weakly Schreier extension, we have a complete classification of the weakly Schreier extensions in $\RFrm$.
From now on, we will refer to this class of split extensions as the \emph{adjoint extensions}.

Let $\mathcal{S}$ be the class of normal epimorphisms $\RFrm$ equipped with their left adjoints. It is clear that all isomorphisms belong to $\mathcal{S}$ and we will show $\mathcal{S}$ is stable under pullback in \cref{ExtFunctor1}. Finally, for any such map, the adjoint and the kernel are jointly extremally epic by \cref{adjointsplitting}. Thus, the pointed category $\RFrm$ is $\mathcal{S}$-protomodular in the sense of \cite{bourn2015partialMaltsev}. That is, it is almost an $\mathcal{S}$-protomodular category in the sense of \cite{bourn2015Sprotomodular} except for the requirement of finite-completeness.

Notice also that any \emph{extension} in $\RFrm$ --- that is, a diagram \normalext{N}{k}{G}{e}{H}
in which $e$ is the cokernel of $k$, and $k$ is the kernel of $e$ --- gives rise to a unique adjoint extension and vice versa. Thus the adjoint extension problem coincides with the extension problem in $\RFrm$. However, as we will see later these concepts should not conflated, since the morphisms of extensions and (adjoint) split extensions are not the same.

\subsection{Artin glueings}

In the introduction we gave an informal description of the Artin glueing construction. We now discuss them in more detail and explore the connection to adjoint extensions. The ideas behind the results of this section are well understood (for example, see \cite{niefield1981cartesian}), but we prove them in our current context for completeness.

\begin{definition}
The \emph{Artin glueing} of two frames $H$ and $N$ along a finite-meet preserving map $\alpha \colon H \to N$ is given by the frame \[\Gl(\alpha) = \{(n,h) \in N \times H \mid n \le \alpha(h)\}\]
equipped with projections $\pi_1 \colon \Gl(\alpha) \to N$ and $\pi_2 \colon \Gl(\alpha) \to H$. Here the finite meet and arbitrary join operations in $\Gl(\alpha)$ are taken componentwise.
\end{definition}
Since meets in $\Gl(\alpha)$ are computed componentwise, we see that $\pi_1$ and $\pi_2$ preserve finite meets and so are morphisms in $\RFrm$. In fact, they both have right adjoints in $\RFrm$. The right adjoint of $\pi_1$ is given by ${\pi_1}_*(n) = (n,1)$ and the right adjoint of $\pi_2$ is given by ${\pi_2}_*(h) = (\alpha(h),h)$.

With these right adjoints, Artin glueings give rise to adjoint extensions.
\begin{proposition}\label{glueings_are_adjoint_extensions}
Let $\alpha \colon H \to N$ be a finite-meet preserving map. The diagram \[\splitext{N}{{\pi_1}_*}{\Gl(\alpha)}{\pi_2}{{\pi_2}_*}{H}\]
is an adjoint extension.
\end{proposition}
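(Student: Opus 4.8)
The plan is to recognise this diagram as the canonical split extension attached to a normal epimorphism, as classified by \cref{cokernelsexist,kernelsexist}. The only preliminary observation required is that $\alpha$, preserving finite meets, preserves the empty meet, so $\alpha(1) = 1$; hence every pair $(n,1)$ with $n \in N$ lies in $\Gl(\alpha)$, and (trivially, since $0 \le \alpha(h)$) so does every pair $(0,h)$ with $h \in H$. Recall also that $\pi_1$, $\pi_2$ are $\RFrm$-morphisms and ${\pi_1}_*$, ${\pi_2}_*$ are their right adjoints, as recorded just before the statement.

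First I would put $u := {\pi_1}_*(0) = (0,1) \in \Gl(\alpha)$. Since the finite meets and arbitrary joins of $\Gl(\alpha)$ are componentwise, one computes ${\downarrow}u = \{(0,h) \mid h \in H\}$ and ${\uparrow}u = \{(n,1) \mid n \in N\}$, and checks that $(0,h) \mapsto h$ is a frame isomorphism ${\downarrow}u \cong H$ and $n \mapsto (n,1)$ a frame isomorphism $N \cong {\uparrow}u$. By \cref{cokernelsexist} the cokernel of ${\pi_1}_*$ is $-\wedge u \colon \Gl(\alpha) \triangleHeadArrow {\downarrow}u$, which sends $(n,h)$ to $(0,h)$; transported along ${\downarrow}u \cong H$ this is precisely $\pi_2$, so $\pi_2$ is the cokernel of ${\pi_1}_*$. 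Dually, by \cref{kernelsexist} the kernel of this normal epimorphism is the inclusion ${\uparrow}u \hookrightarrow \Gl(\alpha)$; transported along $N \cong {\uparrow}u$ this is precisely ${\pi_1}_*$, so ${\pi_1}_*$ is the kernel of $\pi_2$.

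It remains to handle the splitting. Directly, $\pi_2{\pi_2}_*(h) = \pi_2(\alpha(h),h) = h$, so ${\pi_2}_*$ is a section of $\pi_2$, and ${\pi_2}_*$ is right adjoint to $\pi_2$; hence the diagram is an adjoint extension. One may also verify that the canonical section $(-)^u$ supplied by \cref{cokernelsexist} agrees with ${\pi_2}_*$ under the identifications above (indeed $(0,h)^{(0,1)} = (\alpha(h),h)$), although this is forced by \cref{adjointsplitting} once we know the extension is split.

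I do not anticipate a genuine obstacle: the content is bookkeeping, the delicate points being the computation of ${\uparrow}u$ (which genuinely uses $\alpha(1)=1$) and the check that the componentwise structure makes the two maps above frame isomorphisms intertwining the relevant arrows. If one preferred to verify the universal properties by hand rather than cite \cref{cokernelsexist,kernelsexist}, the only non-formal step is the cokernel property, where the decomposition $(n,h) = {\pi_1}_*(n) \wedge {\pi_2}_*(h)$ — valid exactly because $n \le \alpha(h)$ — forces any morphism annihilating ${\pi_1}_*$ to be constant on the fibres of $\pi_2$ and hence to factor through it.
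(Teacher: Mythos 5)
Your proof is correct and follows essentially the same route as the paper: both identify $H \cong {\downarrow}(0,1)$ and $N \cong {\uparrow}(0,1)$ inside $\Gl(\alpha)$ and then invoke the classification of cokernels and kernels from \cref{cokernelsexist,kernelsexist} to recognise $\pi_2$ as a normal epimorphism with kernel ${\pi_1}_*$, the splitting being the right adjoint section. Your version merely makes a few routine verifications explicit (the computation of ${\uparrow}(0,1)$ via $\alpha(1)=1$ and the check that ${\pi_2}_*$ is the adjoint section) that the paper leaves to the remarks preceding the proposition.
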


\begin{proof}
It is enough to show that $\pi_2$ is a normal epi and that ${\pi_1}_*$ is its kernel. Observe that $H$ is isomorphic to ${\downarrow}(0,1) \subseteq \Gl(\alpha)$ via the map $h \mapsto (0,h)$ and that this makes the following diagram commute.
\begin{center}
   \begin{tikzpicture}[node distance=1.8cm, auto]
    \node (A) {$\Gl(\alpha)$};
    \node (B) [right of=A] {$H$};
    \node (C) [below of=B] {${\downarrow}(0,1)$};
    \draw[->] (A) to node {$\pi_2$} (B);
    \draw[-normalHead] (A) to node [swap]{${-} \wedge (0,1)$} (C);
    \draw[->] (B) to node [below,rotate=90,xshift=1pt] {$\sim$} (C);
   \end{tikzpicture}
\end{center}
The map ${-} \wedge (0,1)$ is a normal epi and hence so is $\pi_2$.
Furthermore, the kernel of ${-} \wedge (0,1)$ is ${\uparrow}(0,1) \hookrightarrow \Gl(\alpha)$, which is clearly isomorphic to ${\pi_1}_* \colon N \to \Gl(\alpha)$.
\end{proof}

Notice that $\alpha$ can be recovered from the glueing by considering the composite  $\pi_1{\pi_2}_* = \alpha$.
In fact, any extension \splitext{N}{k}{G}{e}{e_*}{H} gives rise a finite-meet preserving map $k^*e_*$ and we may glue along this map to obtain another extension as above.

In order to compare the original adjoint extension to the one given by the glueing of $k^*e_*$ we require a notion of morphism of adjoint extensions.
\begin{definition}\label{morphismdefinition}
A \emph{morphism of adjoint extensions} from $\splitext{N}{k}{G}{e}{e_*}{H}$
to $\splitext{N}{k'}{G'}{e'}{e'_*}{H}$ is a meet-preserving map $f \colon G \to G'$ making the three squares in the following diagram commute.
\begin{center}
   \begin{tikzpicture}[node distance=2cm, auto]
    \node (A) {$N$};
    \node (B) [right of=A] {$G$};
    \node (C) [right of=B] {$H$};
    \node (D) [below of=A] {$N$};
    \node (E) [right of=D] {$G'$};
    \node (F) [right of=E] {$H$};
    \draw[normalTail->] (A) to node {$k$} (B);
    \draw[transform canvas={yshift=0.5ex},-normalHead] (B) to node {$e$} (C);
    \draw[transform canvas={yshift=-0.5ex},->] (C) to node {$e_*$} (B);
    \draw[normalTail->] (D) to node {$k'$} (E);
    \draw[transform canvas={yshift=0.5ex},-normalHead] (E) to node {$e'$} (F);
    \draw[transform canvas={yshift=-0.5ex},->] (F) to node {$e_*'$} (E);
    \draw[->] (B) to node {$f$} (E);
    \draw[double equal sign distance] (A) to (D);
    \draw[double equal sign distance] (C) to (F);
   \end{tikzpicture}
\end{center}
Explicitly, we require $fk = k'$, $e'f = e$ and $fe_* = e'_*$.
\end{definition}
It is apparent that isomorphisms of adjoint extensions are those for which the meet-preserving map is an isomorphism.

\begin{theorem}\label{MainResult}
The adjoint extensions \splitext{N}{k}{G}{e}{e_*}{H} and \splitext{N}{{\pi_1}_*}{\Gl(k^*e_*)}{\pi_2}{{\pi_2}_*}{H}
are isomorphic.
\end{theorem}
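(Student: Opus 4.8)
The plan is to write down an explicit isomorphism of adjoint extensions $\phi\colon G \to \Gl(\alpha)$, where $\alpha = k^*e_*$ and $k^*$ is the left adjoint of $k$ furnished by \cref{kernelsexist}. Using \cref{cokernelsexist,kernelsexist} to normalise, we may assume $N = {\uparrow}u$, $H = {\downarrow}u$, $k$ the inclusion, $e(x) = x \wedge u$, $k^*(x) = x \vee u$ and $e_*(h) = h^u$; in particular $k^*$ and $e_*$ are finite-meet preserving, $k^*k = \id_N$, $ee_* = \id_H$, $ek = \top$ and $\id_G \le e_*e$. The natural candidate is the pairing of the two structure maps of the extension,
\[ \phi(g) = \bigl(k^*(g),\, e(g)\bigr). \]
First I would check $\phi$ is well defined, i.e.\ that $(k^*(g), e(g)) \in \Gl(\alpha)$, which amounts to $k^*(g) \le \alpha(e(g)) = k^*e_*e(g)$; this follows from $g \le e_*e(g)$ and monotonicity of $k^*$. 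As $k^*$ and $e$ preserve finite meets and meets in $\Gl(\alpha)$ are computed componentwise, $\phi$ is a morphism in $\RFrm$.

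Next I would verify the three commuting squares of \cref{morphismdefinition}. The equation $\pi_2\phi = e$ holds by construction. For $\phi k = {\pi_1}_*$ we compute $\phi k(n) = (k^*k(n),\, ek(n)) = (n, 1) = {\pi_1}_*(n)$, using $k^*k = \id_N$ and $ek = \top$. For $\phi e_* = {\pi_2}_*$ we compute $\phi e_*(h) = (k^*e_*(h),\, ee_*(h)) = (\alpha(h), h) = {\pi_2}_*(h)$, using $\alpha = k^*e_*$ and $ee_* = \id_H$.

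It then remains to show $\phi$ is invertible. I expect the inverse to be the ``canonical decomposition'' map $\psi(n,h) = k(n) \wedge e_*(h)$, which is precisely the expression appearing in the weakly Schreier condition. On one hand $\psi\phi(g) = kk^*(g) \wedge e_*e(g) = g$ is exactly the identity proved at the end of \cref{adjointsplitting}. On the other hand, using that $k^*$ and $e$ preserve finite meets,
\[ \phi\psi(n,h) = \bigl(k^*k(n) \wedge k^*e_*(h),\ ek(n) \wedge ee_*(h)\bigr) = \bigl(n \wedge \alpha(h),\ 1 \wedge h\bigr) = (n,h), \]
where $n \wedge \alpha(h) = n$ because $(n,h) \in \Gl(\alpha)$ forces $n \le \alpha(h)$. (Alternatively, once $\phi$ is known to be bijective one may simply invoke the fact that a finite-meet preserving bijection between frames is automatically an isomorphism in $\RFrm$.)

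I do not expect any genuine obstacle: the argument is a short sequence of routine identities. The only points requiring a little care are the well-definedness of $\phi$ and the check that $\psi$ is a two-sided inverse; but the heart of the latter is the decomposition $g = kk^*(g)\wedge e_*e(g)$, which was already established in \cref{adjointsplitting} and is where the weakly Schreier (equivalently, adjoint) hypothesis does the real work.
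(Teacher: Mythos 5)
Your proposal is correct and follows essentially the same route as the paper's own proof: the same maps $g \mapsto (k^*(g), e(g))$ and $(n,h) \mapsto k(n) \wedge e_*(h)$, the same well-definedness check via $g \le e_*e(g)$, the same appeal to the decomposition $g = kk^*(g) \wedge e_*e(g)$ from \cref{adjointsplitting}, and the same verification of the three squares. Nothing to add.
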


\begin{proof}
 Without loss of generality we may assume $N = {\uparrow} u$, $H = {\downarrow} u$, $k(x) = x$, $e(x) = x \wedge u$ and $e_*(x) = x^u$.
 
 Consider the maps $f \colon G \to \Gl(k^*e_*)$ and $f' \colon \Gl(k^*e_*) \to G$ given by $f(g) = (k^*(g),e(g))$ and $f'(n,h) = k(n) \wedge e_*(h)$.
 Notice that the map $f$ is well defined because $g \leq e_*e(g)$ and so $k^*(g) \leq k^*e_*e(g)$. Clearly both $f$ and $f'$ preserve finite meets.
 
 \begin{center}
   \begin{tikzpicture}[node distance=2.2cm, auto]
    \node (A) {$N$};
    \node (B) [right of=A] {$G$};
    \node (C) [right of=B] {$H$};
    \node (D) [below of=A] {$N$};
    \node (E) [right of=D] {$\Gl(k^*e_*)$};
    \node (F) [right of=E] {$H$};
    \draw[normalTail->] (A) to node {$k$} (B);
    \draw[transform canvas={yshift=0.5ex},-normalHead] (B) to node {$e$} (C);
    \draw[transform canvas={yshift=-0.5ex},->] (C) to node {$e_*$} (B);
    \draw[normalTail->] (D) to node {${\pi_1}_*$} (E);
    \draw[transform canvas={yshift=0.5ex},-normalHead] (E) to node {$\pi_2$} (F);
    \draw[transform canvas={yshift=-0.5ex},->] (F) to node {${\pi_2}_*$} (E);
    \draw[transform canvas={xshift=0.5ex},->] (B) to node {$f$} (E);
    \draw[transform canvas={xshift=-0.5ex},->] (E) to node {$f'$} (B);
    \draw[double equal sign distance] (A) to (D);
    \draw[double equal sign distance] (C) to (F);
   \end{tikzpicture}
\end{center}
 
 We claim these maps are inverses. First note $f'f(g) = kk^*(g) \wedge e_*e(g) = g$, where the final equality follows as in the proof of \cref{adjointsplitting}. Next we have $ff'(n,h) = (k^*(k(n) \wedge e_*(h)), e(k(n) \wedge e_*(h))) = (k^*k(n) \wedge k^*e_*(h), ek(n) \wedge ee_*(h))$. Notice that $ek(n) = 1$ and $ee_*(h) = h$ and so the second component is $h$ as required. Next observe that $k^*k(n) = n$ and $n \leq k^*e_*(h)$, since $(n,h) \in \Gl(k^*e_*)$. This gives that the meet in the first component is $n$ as required. Thus $G$ and $\Gl(k^*e_*)$ are isomorphic in $\RFrm$.
 
 It remains to show that $f$ makes the appropriate squares commute. Firstly, $fk(n) = (k^*k(n),ek(n)) = (n,1) = {\pi_1}_*(n)$ as required. Next notice $\pi_2 f(g) = \pi_2(k^*(g),e(g)) = e(g)$ as required. Finally, $fe_*(h) = (k^*e_*(h),ee_*(h)) = (k^*e_*(h),h) = {\pi_2}_*(h)$ as required.
\end{proof}

So, similarly to the case of groups where the split extensions could be identified with maps $\alpha \colon H \to \Aut(N)$, we see that the adjoint extensions between $H$ and $N$ correspond to finite-meet preserving maps $\beta \colon H \to N$.

Using \cref{adjointsplitting}, we can also state \cref{MainResult} in terms of weakly Schreier extensions.

\begin{corollary}
The Artin glueings of $H$ and $N$ are in one-to-one correspondence with the weakly Schreier extensions of $H$ by $N$ in $\Frm_\wedge$.
\end{corollary}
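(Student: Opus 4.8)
The plan is to reduce the statement to the adjoint-extension language and then assemble the bijection from results already in hand. By \cref{adjointsplitting}, the weakly Schreier split extensions of $H$ by $N$ are exactly the adjoint extensions of $H$ by $N$, so it suffices to set up a one-to-one correspondence between the finite-meet preserving maps $\alpha \colon H \to N$ (which parametrise the Artin glueings) and the isomorphism classes of adjoint extensions $\splitext{N}{k}{G}{e}{e_*}{H}$.

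First I would record the two assignments. In one direction, \cref{glueings_are_adjoint_extensions} sends a map $\alpha$ to the adjoint extension carried by $\Gl(\alpha)$. In the other direction, an adjoint extension $\splitext{N}{k}{G}{e}{e_*}{H}$ is sent to the composite $k^{*}e_{*} \colon H \to N$, which preserves finite meets because both $k^{*}$ and $e_{*}$ do, and so is a legitimate glueing datum. I would then check this second assignment is well defined on isomorphism classes: if $f$ is an isomorphism of adjoint extensions from $(G,k,e)$ to $(G',k',e')$, then invertibility of $f$ gives $k'^{*} = k^{*}f^{-1}$ (the left adjoint of $k' = fk$) while $e'_{*} = fe_{*}$ by definition of a morphism of adjoint extensions, so $k'^{*}e'_{*} = k^{*}f^{-1}fe_{*} = k^{*}e_{*}$.

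Next I would verify the two round trips. Starting from an adjoint extension, passing to $\Gl(k^{*}e_{*})$ and back yields an isomorphic adjoint extension; this is precisely \cref{MainResult}. Starting from a map $\alpha$, forming the glueing extension and then extracting the composite $\pi_1{\pi_2}_{*}$ returns $\alpha$, which is the computation noted just after \cref{glueings_are_adjoint_extensions} (using ${\pi_2}_{*}(h) = (\alpha(h),h)$ and $\pi_1(n,h) = n$). Together these show the two assignments are mutually inverse, and in particular distinct maps $\alpha$ give non-isomorphic extensions.

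I do not anticipate a real obstacle: the substance is already supplied by \cref{adjointsplitting} and \cref{MainResult}. The only point needing care is the bookkeeping of what "one-to-one correspondence" means — Artin glueings being counted by the parametrising maps $\alpha$, weakly Schreier extensions being counted up to isomorphism of adjoint extensions — together with the well-definedness check above; with that settled, the corollary is immediate.
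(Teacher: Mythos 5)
Your proposal is correct and follows essentially the same route the paper intends: the corollary is obtained by combining \cref{adjointsplitting} (weakly Schreier = adjoint extensions) with \cref{MainResult} and the observation that $\pi_1{\pi_2}_* = \alpha$ recovers the glueing datum. Your extra bookkeeping (well-definedness of $k^*e_*$ on isomorphism classes and the two round trips) simply makes explicit what the paper leaves implicit, and it is accurate.
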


This description of weakly Schreier extensions is generalised to the case of monoids in \cite{faul2019weaklySchreier}.

\section{Extension functors}

\subsection{Functoriality of adjoint extensions}

In the category of groups (or any protomodular category with semidirect products --- see \cite{bourn1998protomodularity} and \cite{borceux2005internal}) there is a functor $\SpltExt(-,N)$ for each object $N$ which sends an object $H$ to the set of isomorphism classes of split extensions of $H$ by $N$. This functor acts on morphisms by pullback as described below.

Let $f \colon H' \to H$ be morphism and suppose \splitext{N}{k}{G}{e}{s}{H} is a split extension of groups. Consider the pullback $e'$ of $e$ along $f$.
\begin{center}
  \begin{tikzpicture}[node distance=2.5cm, auto]
    \node (A) {$G \times_{\scriptscriptstyle H} H'$};
    \node (B) [below of=A] {$G$};
    \node (C) [right of=A] {$H'$};
    \node (D) [right of=B] {$H$};
    \node (E) [left of=B] {$N$};
    \node (F) [left of=A] {$N$};
    \draw[->] (A) to node [swap] {$f'$} (B);
    \draw[transform canvas={yshift=0.5ex},-normalHead] (A) to node {$e'$} (C);
    \draw[transform canvas={yshift=-0.5ex},->] (C) to node {$s'$} (A);
    \draw[transform canvas={yshift=0.5ex},-normalHead] (B) to node {$e$} (D);
    \draw[transform canvas={yshift=-0.5ex},->] (D) to node {$s$} (B);
    \draw[->] (C) to node {$f$} (D);
    \draw[normalTail->] (F) to node {$k'$} (A);
    \draw[normalTail->] (E) to node [swap] {$k$} (B);
    \draw[double equal sign distance] (E) to (F);
    \begin{scope}[shift=({A})]
        \draw +(0.3,-0.7) -- +(0.7,-0.7) -- +(0.7,-0.3);
    \end{scope}
  \end{tikzpicture}
\end{center}
The kernel $k'$ of $e'$ has domain $N$ and together with $e'$ forms an extension. The universal property of the pullback then gives a map $s'$ as a canonical choice of splitting of $e'$. We define $\SpltExt(f,N) \colon \SpltExt(H,N) \to \SpltExt(H',N)$ to be the function sending the original extension to the one described above. 

We now show that the adjoint extensions in $\RFrm$ have associated functors $\GenExt(-,N)$ in a similar way.

\begin{proposition}\label{ExtFunctor1}
Let $e \colon G \triangleHeadArrow H$ be a normal epimorphism and $k$ its kernel. The pullback of $e$ along a morphism $f \colon H' \to H$ exists and is a normal epimorphism of the form $\pi'_2 \colon \Gl(k^*e_*f) \triangleHeadArrow H'$.
\end{proposition}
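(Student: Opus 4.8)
The plan is to compute the pullback explicitly and then identify it with an Artin glueing. First I would use \cref{cokernelsexist,kernelsexist} to normalise the situation: write $H = {\downarrow}u$ for the relevant $u \in G$, so that $e(x) = x \wedge u$, $k$ is the inclusion of ${\uparrow}u$, $k^*(x) = x \vee u$ and $e_*(x) = x^u$. The composite $k^*e_*f \colon H' \to N$ is then a finite-meet preserving map, so $\Gl(k^*e_*f)$ makes sense. I would then construct comparison maps between the candidate pullback $\Gl(k^*e_*f)$ (with its projection $\pi'_2$ to $H'$ and the map $\pi'_1$ to $N$, postcomposed with $k$, landing in $G$) and show it satisfies the universal property of the pullback of $e$ along $f$.

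The key steps, in order, are: (1) exhibit the cone, i.e.\ check that $k\pi'_1 \colon \Gl(k^*e_*f) \to G$ and $\pi'_2 \colon \Gl(k^*e_*f) \to H'$ satisfy $e \circ (k\pi'_1) = f \circ \pi'_2$; this amounts to the computation $e(k(n)) \wedge \dots$, but more directly $e k \pi'_1(n,h') = 1$ while we want $f\pi'_2(n,h') = f(h')$, so in fact I should send $(n,h')$ not to $k(n)$ alone but to $k(n) \wedge e_*f(h')$ in $G$ — precisely mimicking the map $f'$ from the proof of \cref{MainResult}. With this corrected map $g \mapsto k(n) \wedge e_*f(h')$, the cone condition reads $e(k(n) \wedge e_*f(h')) = f(h')$, which holds since $ek(n) = 1$ and $ee_*f(h') = f(h')$ (as $f(h') \in H$). (2) Check universality: given $X$ with maps $a \colon X \to G$ and $b \colon X \to H'$ satisfying $ea = fb$, define $X \to \Gl(k^*e_*f)$ by $x \mapsto (k^*a(x), b(x))$; verify this lands in the glueing using $a(x) \le e_*e a(x) = e_* f b(x)$ hence $k^*a(x) \le k^*e_*f(b(x))$, verify the two triangles commute, and verify uniqueness. (3) Conclude that $\pi'_2$ is the pullback projection and that it is a normal epimorphism — the latter because by \cref{glueings_are_adjoint_extensions} the projection $\pi_2$ out of any Artin glueing is a normal epi. (4) Finally note the kernel of $\pi'_2$ has domain $N$, completing the picture; this also establishes that $\mathcal{S}$ is stable under pullback, as promised in the preceding text.

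The main obstacle I expect is step (2), specifically checking that the comparison map $x \mapsto (k^*a(x), b(x))$ is not only well defined and cone-compatible but is the \emph{unique} such map: uniqueness requires showing that any cone map $m \colon X \to \Gl(k^*e_*f)$ is forced to have first component $k^*a(x)$, which in turn uses the fact that an element $(n,h')$ of the glueing is determined by $h'$ together with $n$, and that the requirement $k\pi'_1 m = a$ (composed appropriately with the corrected structure map) pins down $n$ via $k^*$. This is the same bookkeeping that appears in \cref{MainResult}, so it should go through, but it is the place where one must be careful about which map out of $\Gl(k^*e_*f)$ plays the role of $f'$ and about the interaction of $k^*$ with meets. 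The remaining steps are routine given the earlier propositions.
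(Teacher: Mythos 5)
Your proposal is correct, but it proves the universal property by hand rather than following the paper's route. The paper first uses \cref{MainResult} to replace $e$ by $\pi_2 \colon \Gl(k^*e_*) \to H$, then computes the pullback in the category $\SLat$ of meet-semilattices (where it exists for free, $\SLat$ being algebraic) as the set of pairs $((n,h),h')$ with $h = f(h')$, identifies this with $\Gl(k^*e_*f)$, and concludes it is a pullback in $\RFrm$ because it is a frame and $\RFrm$ is a full subcategory of $\SLat$; normality of $\pi'_2$ then comes from \cref{glueings_are_adjoint_extensions}, exactly as in your step (3). Your version instead verifies the universal property directly in $\RFrm$: the cone map you settle on, $(n,h') \mapsto k(n) \wedge e_*f(h')$, is the right one (your initial correction away from $k\pi'_1$ alone is exactly the point), the comparison map $x \mapsto (k^*a(x), b(x))$ is well defined and satisfies $pm = a$ by the computation $kk^*(g) \wedge e_*e(g) = g$ from \cref{adjointsplitting}, and the uniqueness step you flag as the main obstacle does go through as you suggest: from $k(n)\wedge e_*f(h') = a(x)$ apply the meet-preserving $k^*$, use $k^*k = \id$ and the defining constraint $n \le k^*e_*f(h')$ of the glueing to force $n = k^*a(x)$. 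What your approach buys is independence from the embedding into $\SLat$ and explicit comparison maps (which also make the induced splitting and the stability of $\mathcal{S}$ under pullback visible); what the paper's approach buys is brevity, since the universal-property bookkeeping is outsourced to general facts about limits in algebraic categories and full subcategories.
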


\begin{proof}
It is sufficient to compute the pullback in the category $\SLat$ of meet-semilattices and to observe that it is a frame. By \cref{MainResult} we can take $e$ to be $\pi_2 \colon \Gl(k^*e_*) \to H$. Since $\SLat$ is algebraic, the pullback of $\pi_2$ and $f$ is given by $\Gl(k^*e_*) \times_{\scriptscriptstyle H} H' = \{((n,h),h') \in \Gl(k^*e_*) \times H' : \pi_2(n,h) = f(h')\}$
with projections $p_1$ and $p_2$.
\begin{center}
  \begin{tikzpicture}[node distance=2.5cm, auto]
    \node (A) {$\Gl(k^*e_*) \times_{\scriptscriptstyle H} H'$};
    \node (B) [below of=A] {$\Gl(k^*e_*)$};
    \node (C) [right of=A] {$H'$};
    \node (D) [right of=B] {$H$};
    \draw[->] (A) to node [swap] {$p_1$} (B);
    \draw[->] (A) to node {$p_2$} (C);
    \draw[-normalHead] (B) to node [swap] {$\pi_2$} (D);
    \draw[->] (C) to node {$f$} (D);
    \begin{scope}[shift=({A})]
        \draw +(0.3,-0.7) -- +(0.7,-0.7) -- +(0.7,-0.3);
    \end{scope}
  \end{tikzpicture}
 \end{center}

Because $\pi_2(n,h) = h$, the element $((n,h),h')$ belongs to the pullback if and only if $f(h') = h$. Combining this with the fact that $(n,h) \in \Gl(k^*e_*)$ we have that $n \leq k^*e_*f(h')$. It is then easy to see that $((n,h),h')$ belongs to the pullback if and only if $(n,h') \in \Gl(k^*e_* f)$. This induces an obvious isomorphism giving that $\pi'_2 \colon \Gl(k^*e_* f) \to H'$ is the pullback of $e$ along $f$. This map is a normal epi by \cref{glueings_are_adjoint_extensions}.
\end{proof}

This result allows us to define a family of functors $(\GenExt(-,N))_{N \in \RFrm}$, where $\GenExt(H,N)$ is the set of isomorphism classes of adjoint extensions of $H$ by $N$ and $\GenExt(f,N)\colon \GenExt(H,N) \to \GenExt(H', N)$ sends the adjoint extension \splitext{N}{k}{G}{e}{e_*}{H} to \splitext{N}{{\pi'_1}_*}{\Gl(k^*e_*f)}{\pi'_2}{{\pi'_2}_*}{H'}.

Recall that in the case of groups $\SpltExt(-,N)$ is representable with $\Aut(N)$ as the representing object. \Cref{ExtFunctor1,MainResult} together give a similar result in our setting.

\begin{corollary}\label{natiso1}
For each frame $N$, the functor $\GenExt(-,N) \colon \RFrm\op \to \Set$ is naturally isomorphic to $\Hom(-,N) \colon \RFrm\op \to \Set$. 
\end{corollary}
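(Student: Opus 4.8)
The plan is to exhibit, for each frame $H$, a bijection $\Phi_H \colon \GenExt(H,N) \xrightarrow{\sim} \Hom(H,N)$ and to check that these assemble into a natural transformation. By \cref{MainResult} every adjoint extension of $H$ by $N$ is isomorphic to the glueing extension $\splitext{N}{{\pi_1}_*}{\Gl(\beta)}{\pi_2}{{\pi_2}_*}{H}$ for $\beta = k^*e_*$, so I would define $\Phi_H$ on an isomorphism class by sending $\splitext{N}{k}{G}{e}{e_*}{H}$ to $k^*e_* \in \Hom(H,N)$. The first task is to show this is well defined on isomorphism classes: if $f \colon G \to G'$ is an isomorphism of adjoint extensions, then from $fk = k'$, $e'f = e$ and $fe_* = e'_*$ one computes $k'^*e'_* = k'^* f e_*$, and since $f$ is an isomorphism with $k' = fk$ a normal mono, $k'^* = k^* f^{-1}$, giving $k'^*e'_* = k^* e_*$; so the map factors through $\GenExt(H,N)$.

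Next I would establish bijectivity of $\Phi_H$. Surjectivity is immediate from \cref{glueings_are_adjoint_extensions}: any $\alpha \colon H \to N$ is realised by $\Gl(\alpha)$, and the recovered map is $\pi_1{\pi_2}_* = \alpha$ as noted just after that proposition. For injectivity, suppose two adjoint extensions have $k^*e_* = k'^*e'_* =: \beta$; by \cref{MainResult} each is isomorphic to the glueing extension along $\beta$, hence isomorphic to each other, so they represent the same class in $\GenExt(H,N)$. This reduces the whole bijectivity claim to \cref{MainResult} plus the bookkeeping above.

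Finally, naturality. Given $f \colon H' \to H$, I must check that $\Phi_{H'} \circ \GenExt(f,N) = \Hom(f,N) \circ \Phi_H$, i.e. that precomposing the recovered map with $f$ agrees with recovering the map from the pulled-back extension. By \cref{ExtFunctor1}, the pullback of $\splitext{N}{k}{G}{e}{e_*}{H}$ along $f$ is (isomorphic to) the glueing extension $\splitext{N}{{\pi'_1}_*}{\Gl(k^*e_*f)}{\pi'_2}{{\pi'_2}_*}{H'}$, and by the remark after \cref{glueings_are_adjoint_extensions} the map recovered from a glueing $\Gl(\alpha)$ is exactly $\alpha$. Hence $\Phi_{H'}(\GenExt(f,N)[\text{ext}]) = k^*e_*f = (\Phi_H[\text{ext}]) \circ f = \Hom(f,N)(\Phi_H[\text{ext}])$, which is precisely the required naturality square. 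I expect the only real subtlety to be the well-definedness computation $k'^* = k^* f^{-1}$ for an isomorphism of extensions — that is, checking the left adjoints transform correctly — since everything else is a direct appeal to \cref{MainResult}, \cref{glueings_are_adjoint_extensions} and \cref{ExtFunctor1}; but as $f$ is invertible this is routine, so the proposition should follow without genuine difficulty once those earlier results are in hand.
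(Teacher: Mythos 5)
Your proposal is correct and follows essentially the same route as the paper, which deduces the corollary directly from \cref{MainResult} (the bijection via $k^*e_*$ and glueing) and \cref{ExtFunctor1} (pullback along $f$ yields $\Gl(k^*e_*f)$, i.e.\ precomposition). You merely spell out the well-definedness and bijectivity bookkeeping that the paper leaves implicit, and those checks are fine.
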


In fact, things work even better in our situation than in the group case.
It is natural to ask whether the family of functors $(\SpltExt(-,N))_{N \in \Grp}$ assemble into a bifunctor $\SpltExt \colon \Grp\op \times \Grp \to \Set$.
In particular, we could ask if there is a functor $\SpltExt(H,-)$ for each group $H$. The functor $\SpltExt(-,N)$ is computed by taking a pullback, so we might expect $\SpltExt(H,-)$ is given by a pushout. However, this does not work as smoothly as in the former case. In fact, $(\SpltExt(-,N))_{N \in \Grp}$ cannot be extended to a bifunctor at all, as can be seen from the isomorphism $\SpltExt(-,N) \cong \Hom(-,\Aut(N))$, the Yoneda lemma and the fact that $\Aut(-)$ cannot be extended to a functor.

However, in the frame case the isomorphism $\GenExt(-,N) \cong \Hom(-,N)$ shows that that the family $(\GenExt(-,N))_{N \in \RFrm}$ does extend to a bifunctor in an obvious way. Below we show that unlike in the group case the pushout construction for $\GenExt(H,-)$ succeeds.

\begin{proposition}\label{ExtFunctor2}
Let $k \colon N \triangleTailArrow G$ be a normal monomorphism and $e \colon G \triangleHeadArrow H$ its cokernel. Let $f \colon N \to N'$ be a morphism. The pushout of $k$ along $f$ exists and is a normal monomorphism of the form ${\pi'_1}_* \colon N' \triangleTailArrow \Gl(fk^*e_*)$.
\end{proposition}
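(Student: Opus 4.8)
The plan is to dualise the argument of \cref{ExtFunctor1}. By \cref{kernelsexist} we may assume $N = {\uparrow}u$, $G$ is our frame, $k$ is the inclusion with $k^*(x) = x \vee u$, and $e(x) = x \wedge u$ with $e_*(x) = x^u$. Using \cref{MainResult} we may further replace $G$ by $\Gl(k^*e_*)$, so that $k$ becomes ${\pi_1}_*\colon N \to \Gl(k^*e_*)$, sending $n \mapsto (n,1)$. The pushout in question is a colimit, so unlike the pullback case it cannot simply be computed in $\SLat$ and checked to be a frame; instead I would exhibit ${\pi'_1}_*\colon N' \to \Gl(fk^*e_*)$ together with a comparison map $\Gl(k^*e_*) \to \Gl(fk^*e_*)$ making the square commute, and verify the universal property of the pushout directly against an arbitrary cocone.

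The key steps, in order, would be: (i) Define the candidate comparison morphism $g\colon \Gl(k^*e_*) \to \Gl(fk^*e_*)$ by $g(n,h) = (f(n),h)$; this is well defined since $n \le k^*e_*(h)$ implies $f(n) \le fk^*e_*(h)$, and it preserves finite meets since $f$ does and meets are computed componentwise. (ii) Check that the square commutes, i.e. $g \circ {\pi_1}_* = {\pi'_1}_* \circ f$: both sides send $n'$ to $(f(n'),1)$. (iii) Given any frame $X$ with meet-preserving maps $a\colon N' \to X$ and $b\colon \Gl(k^*e_*) \to X$ satisfying $b \circ {\pi_1}_* = a \circ f$, define the induced map $c\colon \Gl(fk^*e_*) \to X$ and show it is the unique mediating morphism. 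The natural guess, mimicking the formula $f'(n,h) = k(n) \wedge e_*(h)$ from \cref{MainResult}, is $c(n',h) = a(n') \wedge b(0,h)$, where $(0,h)$ is the image of $h$ under the splitting ${\pi_2}_*$ composed suitably — more precisely $c(n',h) = a(n') \wedge b({\pi_2}_* (h))$ using that $b$ is defined on $\Gl(k^*e_*)$. (iv) Verify $c$ preserves finite meets, that $c \circ {\pi'_1}_* = a$ and $c \circ g = b$, and uniqueness.

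The main obstacle I expect is step (iii)–(iv): showing the formula for $c$ is well defined as a claimed meet-preserving map into $X$ and genuinely recovers both $a$ and $b$. The verification that $c \circ g = b$ will require re-deriving, inside $X$, the decomposition $(n,h) = (n,1) \wedge {\pi_2}_*(h)$ in $\Gl(k^*e_*)$ and pushing it through $b$; the hypothesis $b \circ {\pi_1}_* = a \circ f$ is what lets us rewrite $b(f(n),1)$, but we actually need $b(n,1)$, so some care is needed — one must instead argue via the element $(n,1) = (n,1)$ directly and use that $b$ applied to the glueing-decomposition of $(n,h)$ gives $b(n,1)\wedge b(0,h)$, then identify $b(n,1)$ appropriately. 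Uniqueness is comparatively easy since $g$ and ${\pi'_1}_*$ are jointly epic (every element $(n',h) \in \Gl(fk^*e_*)$ is $(n',1) \wedge g(\text{something})$, the weakly Schreier / joint-extremal-epi property), so any two mediating maps agreeing on their images coincide. Finally, I would note that ${\pi'_1}_*$ is a normal monomorphism by \cref{glueings_are_adjoint_extensions}, which completes the proof.
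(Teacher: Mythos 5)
Your proposal is correct and follows essentially the same route as the paper: exhibit the square with the comparison map $(n,h)\mapsto(f(n),h)$ and verify the universal property via the mediating map $c(n',h)=a(n')\wedge b({\pi_2}_*(h))=a(n')\wedge b(k^*e_*(h),h)$, which is exactly the paper's formula. The difficulty you anticipate in steps (iii)--(iv) dissolves: the cocone condition gives $a(f(n))=b(n,1)$ directly, which is precisely what is needed to show $c\circ g=b$, and note that ${\pi_2}_*(h)=(k^*e_*(h),h)$ rather than $(0,h)$, so the glueing decomposition pushed through $b$ reads $b(n,h)=b(n,1)\wedge b(k^*e_*(h),h)$.
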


\begin{proof}
By \cref{MainResult} we may take $k$ to be the map ${\pi_1}_* \colon N \to \Gl(k^*e_*)$. We claim that the pushout of ${\pi_1}_*$ and $f$ is given by $\Gl(fk^*e_*)$ with the injections ${\pi'_1}_* \colon N' \to \Gl(fk^*e_*)$ and $f' \colon \Gl(k^*e_*) \to \Gl(fk^*e_*)$, where $f'(n,h) = (f(n),h)$, as in the diagram below.
\begin{center}
  \begin{tikzpicture}[node distance=2.5cm, auto]
    \node (A) {$N$};
    \node (B) [below of=A] {$N'$};
    \node (C) [right of=A] {$\Gl(k^*e_*)$};
    \node (D) [right of=B] {$\Gl(fk^*e_*)$};
    \draw[->] (A) to node [swap] {$f$} (B);
    \draw[normalTail->] (A) to node {${\pi_1}_*$} (C);
    \draw[->] (B) to node {${\pi'_1}_*$} (D);
    \draw[->] (C) to node [swap] {$f'$} (D);
    \begin{scope}[shift=({D})]
        \draw +(-0.3,0.7) -- +(-0.7,0.7) -- +(-0.7,0.3);
    \end{scope}
    \node (X) [below right=1.2cm and 1.2cm of D.center] {$X$};
    \draw[bend right=15,->] (B) to node [swap] {$q$} (X);
    \draw[bend left=15,->] (C) to node {$p$} (X);
    \draw[dashed,->] (D) to node [swap,pos=0.4] {$\ell$} (X);
  \end{tikzpicture}
 \end{center}

Firstly note that $f'$ is well defined, since if $n \leq k^*e_*(h)$, then $f(n) \leq fk^*e_*(h)$. Next notice that 
$f'{\pi_1}_*(n) = f'(n,1) = (f(n),1) = {\pi'_1}_*f(n)$
and so the diagram commutes.

We show the uniqueness condition of the universal property first. Suppose $p \colon \Gl(k^*e_*) \to X$ and $q \colon N' \to X$ together form a cocone and that there is a morphism $\ell \colon \Gl(fk^*e_*) \to X$ such that $\ell f' = p$ and $\ell{\pi'_1}_* = q$. This means that $\ell(f(n),h) = p(n,h)$ and $\ell(n,1) = q(n)$. Notice that each element $(n,h) \in \Gl(fk^*e_*)$ can be written as $(n,h) = (fk^*e_*(h),h) \wedge (n,1)$ and so $\ell$ is uniquely determined by
$\ell(n,h) = \ell(fk^*e_*(h),h) \wedge \ell(n,1) = p(k^*e_*(h),h) \wedge q(n)$.

Note that the map $\ell$ as defined above does indeed preserve finite meets.
So we need only show that $p$ and $q$ factor through $\ell$ as required. For $q$ we have the simple equality $\ell{\pi'_1}_*(n) = \ell(n,1) = p(1,1) \wedge q(n) = q(n)$.

For $p$ we must show that $\ell(f(n),h) = p(n,h)$. We know that $\ell(f(n),h) = p(fk^*e_*(h),h) \wedge q(f(n))$. Because $p$ and $q$ form a cocone, we have that $q(f(n)) = p{\pi_1}_*(n) = p(n,1)$. Substituting this in we get $\ell(f(n),h) = p(fk^*e_*(h),h) \wedge p(n,1) = p(fk^*e_*(h),h) \wedge (n,1)) = p(n,h)$
as required.
\end{proof}

This allows us to define a functor $\GenExt(H,-)$ for each frame $H$. For a map $f \colon N \to N'$, the resulting map $\GenExt(H,f)$ sends the adjoint extension \splitext{N}{k}{G}{e}{e_*}{H} to \splitext{N}{{\pi'_1}_*}{\Gl(k^*e_*f)}{\pi'_2}{{\pi'_2}_*}{H'}. We then have the following corollary as above.

\begin{corollary}\label{natiso2}
The functor $\GenExt(H,-) \colon \RFrm \to \Set$ is naturally isomorphic to $\Hom(H,-) \colon \RFrm\op \to \Set$.
\end{corollary}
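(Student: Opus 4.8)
The plan is to follow the same route as the proof of \cref{natiso1}, replacing the role of \cref{ExtFunctor1} by \cref{ExtFunctor2}. For each frame $N$ I would define a function
\[\Phi_N \colon \GenExt(H,N) \to \Hom(H,N)\]
sending the isomorphism class of an adjoint extension with kernel $k \colon N \to G$, cokernel $e \colon G \triangleHeadArrow H$ and splitting $e_*$ to its \emph{invariant} $k^*e_* \colon H \to N$. It then remains to check that each $\Phi_N$ is a well-defined bijection and that the family $(\Phi_N)_N$ is natural in $N$; this exhibits the required natural isomorphism $\GenExt(H,-) \cong \Hom(H,-)$.

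\emph{Well-definedness and bijectivity.} If $f \colon G \to G'$ is an isomorphism of adjoint extensions, then $fk = k'$ gives $k^* = (k')^*f$ (the left adjoint of an isomorphism is its inverse, and $(-)^*$ reverses composition), while $fe_* = e'_*$; hence $(k')^*e'_* = (k')^*fe_* = k^*e_*$, so $\Phi_N$ is independent of the chosen representative. Surjectivity follows from \cref{glueings_are_adjoint_extensions}: given $\beta \colon H \to N$, the Artin glueing extension on $\Gl(\beta)$ has invariant $({\pi_1}_*)^*{\pi_2}_* = \pi_1{\pi_2}_* = \beta$, using that $\pi_1$ is the left adjoint of ${\pi_1}_*$. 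Injectivity is immediate from \cref{MainResult}: any adjoint extension with invariant $\beta$ is isomorphic to the glueing of $\beta$, so two extensions with the same invariant are isomorphic.

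\emph{Naturality.} Fix $f \colon N \to N'$. By the construction underlying \cref{ExtFunctor2}, $\GenExt(H,f)$ sends the adjoint extension with invariant $\beta$ to the adjoint extension built on the pushout $\Gl(f\beta)$, with kernel ${\pi'_1}_*$ and splitting ${\pi'_2}_*$; by the computation just made, the invariant of this new extension is $\pi'_1{\pi'_2}_* = f\beta$. On the other hand $\Hom(H,f)$ is post-composition with $f$, which sends $\beta$ to $f\beta$ as well. Therefore $\Phi_{N'} \circ \GenExt(H,f) = \Hom(H,f) \circ \Phi_N$, so the $\Phi_N$ assemble into a natural transformation; being componentwise bijective, it is a natural isomorphism.

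I do not anticipate a genuine obstacle here: the substantive content is already contained in \cref{MainResult} and \cref{ExtFunctor2}, and what remains is bookkeeping with adjoints. The only points that need care are the identities $({\pi_1}_*)^* = \pi_1$ and $(k')^*f = k^*$ for an extension isomorphism $f$, which are precisely what make the invariant $k^*e_*$ readable off a glued extension and well defined on isomorphism classes, respectively.
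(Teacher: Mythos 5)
Your proposal is correct and follows the same route the paper intends: the paper derives \cref{natiso2} "as above'', i.e.\ from \cref{MainResult} together with the pushout description in \cref{ExtFunctor2}, which is exactly the invariant-$k^*e_*$ argument you spell out. The extra bookkeeping you supply (well-definedness on isomorphism classes and the identities $({\pi_1}_*)^* = \pi_1$, $(k')^*f = k^*$) is accurate and just makes explicit what the paper leaves implicit.
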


To show the families $(\GenExt(-,N))_{N \in \RFrm}$ and $(\GenExt(H,-))_{H \in \RFrm}$ yield a bifunctor, we only need that $\GenExt(H',g)\GenExt(f,N) = \GenExt(f,N')\GenExt(H,g)$ and to set $\GenExt(f,g)$ to be their common value.
By \cref{natiso1,natiso2} we know that each family is isomorphic to hom functors, which clearly satisfy the condition and hence $\GenExt$ is a bifunctor naturally isomorphic to $\Hom$. We record this in the following theorem.

\begin{theorem}\label{ExtisoHom}
There is a bifunctor $\GenExt \colon \RFrm\op \times \RFrm \to \Set$ where $\GenExt(H,N)$ the set of isomorphism classes of adjoint extensions of $H$ by $N$, $\GenExt(f,N)$ is given by pullback along $f$ and $\GenExt(H,g)$ is given by pushout along $g$ as above.
This is naturally isomorphic to $\Hom \colon \RFrm\op \times \RFrm \to \Set$.
\end{theorem}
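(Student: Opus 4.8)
The plan is to transport the whole question along the natural isomorphisms of \cref{natiso1,natiso2}, reducing everything to the (elementary) bifunctoriality of $\Hom \colon \RFrm\op \times \RFrm \to \Set$.

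First I would fix one family of bijections. For frames $H$ and $N$, let $\theta_{H,N} \colon \GenExt(H,N) \to \Hom(H,N)$ send the isomorphism class of an adjoint extension with normal epi $e$, kernel $k$ and right-adjoint splitting $e_*$ to the finite-meet preserving map $k^*e_*$; by \cref{MainResult,glueings_are_adjoint_extensions} this is a bijection, with inverse $\alpha \mapsto [\Gl(\alpha)]$ (recall that $\pi_1{\pi_2}_* = \alpha$ for the glueing). The point I would stress is that this \emph{same} family $\theta_{H,N}$ underlies both \cref{natiso1} and \cref{natiso2}: unwinding the descriptions of $\GenExt(f,N)$ and $\GenExt(H,g)$ obtained from \cref{ExtFunctor1,ExtFunctor2}, we see that $\GenExt(f,N)$ replaces the classifying map $\alpha = k^*e_*$ by $\alpha\circ f$ and $\GenExt(H,g)$ replaces it by $g\circ\alpha$, so that for $f \colon H' \to H$ and $g \colon N \to N'$
\[\theta_{H',N}\circ\GenExt(f,N) = \Hom(f,N)\circ\theta_{H,N} \qquad\text{and}\qquad \theta_{H,N'}\circ\GenExt(H,g) = \Hom(H,g)\circ\theta_{H,N}.\]

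Next I would establish the interchange law. For $f$ and $g$ as above I must check that $\GenExt(H',g)\circ\GenExt(f,N)$ and $\GenExt(f,N')\circ\GenExt(H,g)$ agree as maps $\GenExt(H,N) \to \GenExt(H',N')$. Since $\theta_{H',N'}$ is bijective, it suffices to check this after post-composing with it; using the two identities above, both sides collapse to $\Hom(f,N')\circ\Hom(H,g)\circ\theta_{H,N} = \Hom(H',g)\circ\Hom(f,N)\circ\theta_{H,N}$, which holds because $\Hom$ is a bifunctor (both composites send $\phi$ to $g\circ\phi\circ f$). Defining $\GenExt(f,g)$ to be the common value, the two displayed identities immediately give $\theta_{H',N'}\circ\GenExt(f,g) = \Hom(f,g)\circ\theta_{H,N}$.

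Finally I would deduce that $\GenExt$ is a bifunctor and that $\theta$ is a natural isomorphism; this is now purely formal. If $(f,g)$ and $(f',g')$ are composable morphisms of $\RFrm\op\times\RFrm$ with composite $(\bar f,\bar g)$, then applying the last identity three times yields $\theta\circ\GenExt(f',g')\circ\GenExt(f,g) = \Hom(f',g')\circ\Hom(f,g)\circ\theta = \Hom(\bar f,\bar g)\circ\theta = \theta\circ\GenExt(\bar f,\bar g)$ (using functoriality of $\Hom$); cancelling the pointwise bijection $\theta$ gives $\GenExt(f',g')\circ\GenExt(f,g) = \GenExt(\bar f,\bar g)$, and preservation of identities is similar. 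The identity $\theta_{H',N'}\circ\GenExt(f,g) = \Hom(f,g)\circ\theta_{H,N}$ then says exactly that $\theta \colon \GenExt \Rightarrow \Hom$ is a natural transformation, hence a natural isomorphism. The only step with genuine content is the remark in the second paragraph that \cref{natiso1,natiso2} are witnessed by one and the same family $\theta_{H,N}$; once that is observed there is no real obstacle, the remainder being routine diagram-chasing with bijections.
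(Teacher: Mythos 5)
Your proposal is correct and follows essentially the same route as the paper: reduce the interchange law and bifunctoriality of $\GenExt$ to that of $\Hom$ via the natural isomorphisms of \cref{natiso1,natiso2}. Your explicit observation that both corollaries are witnessed by the single family $\theta_{H,N} \colon [\text{extension}] \mapsto k^*e_*$ is exactly the point the paper leaves implicit, so this is a slightly more careful write-up of the same argument.
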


\subsection{Additional structure on the set of adjoint extensions}

Since $\RFrm$ is enriched over meet-semilattices, the natural isomorphism of \cref{ExtisoHom} induces a meet-semilattice structure on $\GenExt(H,N)$. On the other hand, the extensions of $H$ by $N$ have a natural category structure using the morphisms defined in \cref{morphismdefinition}.
It is natural to ask how these two structures might be related. Let us start by considering the first notion in more detail.

Recall that an extension is a diagram $\normalext{N}{k}{G}{e}{H}$ in which $k$ is the kernel of $e$ and $e$ is the cokernel of $k$. As discussed at the end of \cref{adjoint_extensions}, every extension in $\RFrm$ admits a unique splitting that turns it into an adjoint extension. Thus far we have mainly focused on describing the elements of $\GenExt(H, N)$ as isomorphism classes of adjoint extensions in order to make an analogy with split extensions of groups.
But thinking of them in terms of \emph{extensions} instead allows us to make a different analogy --- this time to extensions of \emph{abelian} groups. This shift of perspective makes sense because isomorphisms of extensions and adjoint extensions agree.

In an abelian category, the $\Ext$ functor admits a natural abelian group structure. Here the binary operation is called the \emph{Baer sum} of extensions. More generally, in any category with biproducts every object has a unique commutative monoid structure and so any product-preserving $\Set$-valued functor on such a category factors through the category of commutative monoids. When applied to the $\Ext$ functor of an abelian category, this yields the Baer sum operation. In our situation, $\GenExt$ preserves finite products (as it is isomorphic to $\Hom$) and this construction endows $\GenExt(H, N)$ with the structure of a meet-semilattice.

Of course, as mentioned above we can obtain the same result more directly by applying the isomorphism $\GenExt \cong \Hom$ and using the natural meet-semilattice structure on the hom-sets. Explicitly, this gives that $\normalext{N}{{\pi_1}_*}{N \times H}{\pi_2}{H}$ is the top element and the meet of extensions $\normalext{N}{k_1}{G_1}{e_1}{H}$ and $\normalext{N}{k_2}{G_2}{e_2}{H}$ is given by $\normalext{N}{{\pi_1}_*}{\Gl(k_1^*{e_1}_* \wedge k_2^*{e_2}_*)}{\pi_2}{H}$. We can then apply the following proposition to give a particularly concrete description of the meet.

\begin{proposition} \label{meetOfExtensions}
Let $\alpha, \beta \colon H \to N$ be meet-semilattice homomorphisms. Then $\Gl(\alpha \wedge \beta)$ is given by the intersection of $\Gl(\alpha)$ and $\Gl(\beta)$ as sub-meet-semilattices of $N \times H$ in the obvious way. 
\end{proposition}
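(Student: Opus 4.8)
The plan is to prove this essentially by unwinding the definitions, after first checking that both sides of the claimed equality make sense and carry the structure inherited from $N \times H$.

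First I would confirm that the pointwise meet $\alpha \wedge \beta$ is again a finite-meet preserving map, so that $\Gl(\alpha \wedge \beta)$ is defined. For $x, y \in H$ we have $(\alpha \wedge \beta)(x \wedge y) = \alpha(x \wedge y) \wedge \beta(x \wedge y) = \alpha(x) \wedge \alpha(y) \wedge \beta(x) \wedge \beta(y) = (\alpha \wedge \beta)(x) \wedge (\alpha \wedge \beta)(y)$, and similarly $(\alpha \wedge \beta)(1) = 1 \wedge 1 = 1$. This is just the fact, already used above, that the hom-sets of $\SLat$ are meet-semilattices under the pointwise operations.

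Next I would observe that each of $\Gl(\alpha)$, $\Gl(\beta)$ and $\Gl(\alpha \wedge \beta)$ is a sub-meet-semilattice of $N \times H$: each contains $(1,1)$ since $\alpha$, $\beta$ and $\alpha \wedge \beta$ preserve the top element, and each is closed under the componentwise meet, since for instance $n_1 \le \alpha(h_1)$ and $n_2 \le \alpha(h_2)$ force $n_1 \wedge n_2 \le \alpha(h_1) \wedge \alpha(h_2) = \alpha(h_1 \wedge h_2)$. Because the intersection of two sub-meet-semilattices of $N \times H$ is again one, the phrase ``in the obvious way'' is unambiguous, and it remains only to identify underlying sets.

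Finally, the set-level computation is immediate: $(n,h) \in \Gl(\alpha) \cap \Gl(\beta)$ if and only if $n \le \alpha(h)$ and $n \le \beta(h)$, if and only if $n \le \alpha(h) \wedge \beta(h) = (\alpha \wedge \beta)(h)$, if and only if $(n,h) \in \Gl(\alpha \wedge \beta)$. Since the meet-semilattice operation on all three is the restriction of the componentwise operation on $N \times H$, this equality of underlying sets upgrades to equality as sub-meet-semilattices. I do not anticipate any genuine obstacle; the only points needing a moment's care are checking that $\alpha \wedge \beta$ is a bona fide morphism and that all three glueings inherit the \emph{same} operations from $N \times H$, both of which are routine.
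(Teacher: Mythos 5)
Your proposal is correct and follows essentially the same route as the paper: the heart of both arguments is the one-line set-level equivalence $(n,h)\in\Gl(\alpha)\cap\Gl(\beta) \iff n\le\alpha(h)\wedge\beta(h) \iff (n,h)\in\Gl(\alpha\wedge\beta)$. Your additional checks that $\alpha\wedge\beta$ is a morphism and that all three glueings inherit the componentwise operations from $N\times H$ are routine and implicit in the paper, so they add care but no new content.
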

\begin{proof}
Simply observe that
\begin{align*}
\Gl(\alpha \wedge \beta) &= \{(n,h) \in N \times H \mid n \leq \alpha(h) \wedge \beta(h)\} \\
                         &= \{(n,h) \in N \times H \mid n \leq \alpha(h)\} \cap \{(n,h) \in N \times H \mid n \leq \beta(h)\} \\
                         &= \Gl(\alpha) \cap \Gl(\beta),
\end{align*}
where the final intersection is taken in $N \times H$.
\end{proof}

It is also interesting to consider the ordering of extensions induced by this meet-semilattice structure. 
\begin{corollary}\label{glueingorder}
If $\alpha \le \beta$, then there is an obvious inclusion $\Gl(\alpha) \hookrightarrow \Gl(\beta)$.
\end{corollary}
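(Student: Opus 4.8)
The plan is to read the claim off from \cref{meetOfExtensions}. In the meet-semilattice $\Hom(H,N)$ the hypothesis $\alpha \le \beta$ is by definition the statement $\alpha \wedge \beta = \alpha$, that is, $\alpha(h) \le \beta(h)$ for all $h \in H$. \Cref{meetOfExtensions} then gives $\Gl(\alpha) = \Gl(\alpha \wedge \beta) = \Gl(\alpha) \cap \Gl(\beta)$, which exhibits $\Gl(\alpha)$ as a subset of $\Gl(\beta)$, and the inclusion of this subset is the map asserted to exist.

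First I would also spell out the (even shorter) direct argument: if $(n,h) \in \Gl(\alpha)$ then $n \le \alpha(h) \le \beta(h)$, so $(n,h) \in \Gl(\beta)$. The resulting inclusion $\iota \colon \Gl(\alpha) \hookrightarrow \Gl(\beta)$ preserves finite meets and arbitrary joins, since in both frames these are computed componentwise; thus $\Gl(\alpha)$ is a subframe of $\Gl(\beta)$ and $\iota$ is in particular a morphism of $\RFrm$.

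Then I would observe that $\iota$ is moreover a morphism of the associated (non-split) extensions: it commutes with the projections $\pi_2$ and with the kernel inclusions ${\pi_1}_* \colon n \mapsto (n,1)$ (well defined on both glueings because $\alpha(1) = \beta(1) = 1$), since these maps are given by the same formulae before and after passing from $\alpha$ to $\beta$. It is worth flagging, though, that $\iota$ is \emph{not} a morphism of \emph{adjoint} extensions unless $\alpha = \beta$: the right adjoint splittings send $h$ to $(\alpha(h),h)$ on the left and to $(\beta(h),h)$ on the right, and these disagree. This is a concrete manifestation of the warning in \cref{adjoint_extensions} that morphisms of extensions and of adjoint extensions need not coincide, and it explains why the order induced on $\GenExt(H,N)$ from $\Hom(H,N)$ is witnessed by ordinary extension morphisms rather than adjoint-extension ones.

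There is no real obstacle: every step is a direct unwinding of the definitions, and the content of \cref{meetOfExtensions} already does the work. The only thing needing a moment's care is the interpretation of the word ``inclusion'' — it should be read as the subframe inclusion together with the induced morphism of (non-split) extensions, not as a morphism in the category of adjoint extensions.
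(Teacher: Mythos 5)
Your proposal is correct and matches the paper's (implicit) argument: the corollary is read off from \cref{meetOfExtensions} via $\alpha \le \beta \Leftrightarrow \alpha \wedge \beta = \alpha$, or equivalently by the direct observation $n \le \alpha(h) \le \beta(h)$. Your additional remarks — that the inclusion is a frame homomorphism and a morphism of extensions but not of adjoint extensions — are accurate and agree with the paper's subsequent discussion around \cref{category_adjext_poset}.
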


So whenever an extension $\normalext{N}{k_1}{G_1}{e_1}{H}$ is less than or equal to $\normalext{N}{k_2}{G_2}{e_2}{H}$ in $\Ext(N,H)$, we can view $G_1$ as a subset of $G_2$. In fact, we obtain a morphism of extensions in the following sense.

\begin{definition}
The category $\ExtCat(H,N)$ has extensions of $H$ by $N$ as objects and commutative diagrams of the form
\begin{center}
   \begin{tikzpicture}[node distance=2.0cm, auto]
    \node (A) {$N$};
    \node (B) [right of=A] {$G$};
    \node (C) [right of=B] {$H$};
    \node (D) [below of=A] {$N$};
    \node (E) [right of=D] {$G'$};
    \node (F) [right of=E] {$H$};
    \draw[normalTail->] (A) to node {$k$} (B);
    \draw[-normalHead] (B) to node {$e$} (C);
    \draw[normalTail->] (D) to node {$k'$} (E);
    \draw[-normalHead] (E) to node {$e'$} (F);
    \draw[->] (B) to node {$f$} (E);
    \draw[double equal sign distance] (A) to (D);
    \draw[double equal sign distance] (C) to (F);
   \end{tikzpicture}
\end{center}
as morphisms.
\end{definition}
The meet-semilattice structure on $\Hom(G, G')$ induces a meet-semilattice structure on the hom-sets of the category $\ExtCat(H, N)$ and so, in particular, the category is order-enriched.

Compare this to the category of \emph{adjoint} extensions alluded to at the start of this section.

\begin{definition}
The category $\GenExtCat(H,N)$ has adjoint extensions as objects and
morphisms given by diagrams of the form below as in \cref{morphismdefinition}.
\begin{center}
   \begin{tikzpicture}[node distance=2.0cm, auto]
    \node (A) {$N$};
    \node (B) [right of=A] {$G$};
    \node (C) [right of=B] {$H$};
    \node (D) [below of=A] {$N$};
    \node (E) [right of=D] {$G'$};
    \node (F) [right of=E] {$H$};
    \draw[normalTail->] (A) to node {$k$} (B);
    \draw[transform canvas={yshift=0.5ex},-normalHead] (B) to node {$e$} (C);
    \draw[transform canvas={yshift=-0.5ex},->] (C) to node {$e_*$} (B);
    \draw[normalTail->] (D) to node {$k'$} (E);
    \draw[transform canvas={yshift=0.5ex},-normalHead] (E) to node {$e'$} (F);
    \draw[transform canvas={yshift=-0.5ex},->] (F) to node {$e_*'$} (E);
    \draw[->] (B) to node {$f$} (E);
    \draw[double equal sign distance] (A) to (D);
    \draw[double equal sign distance] (C) to (F);
   \end{tikzpicture}
\end{center}
\end{definition}
Note that, unlike morphisms of extensions, morphisms of adjoint extensions are required to preserve the splittings. So while there is a correspondence between the objects, they give different categories. In particular, the map given in \cref{glueingorder} need not be a morphism of adjoint extensions, but in the following theorem we will see that it has a right adjoint which is.

\begin{theorem} \label{category_adjext_poset}
There is an equivalence of categories between the category $\GenExtCat(H,N)$ and the underlying poset of $\GenExt(H, N)$ with the \emph{reverse} order.
\end{theorem}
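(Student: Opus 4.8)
The plan is to exhibit a functor $\GenExtCat(H,N) \to \GenExt(H,N)^{\mathrm{op}}$ (where we regard the poset as a category), show it is essentially surjective and fully faithful, and conclude. Recall from \cref{natiso1,natiso2} and \cref{MainResult} that every adjoint extension of $H$ by $N$ is isomorphic to $\splitext{N}{{\pi_1}_*}{\Gl(\beta)}{\pi_2}{{\pi_2}_*}{H}$ for a unique $\beta \colon H \to N$, namely $\beta = k^*e_*$; essential surjectivity of the comparison functor on objects is then immediate, since distinct $\beta$ give non-isomorphic adjoint extensions and conversely each element of $\GenExt(H,N) \cong \Hom(H,N)$ is hit. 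So the content is entirely about the morphisms: I must show that there is a morphism of adjoint extensions from (the one determined by) $\beta$ to (the one determined by) $\gamma$ \emph{if and only if} $\gamma \le \beta$, and that when such a morphism exists it is \emph{unique}. Uniqueness is the statement that makes the category a poset; combined with the object bijection it gives the claimed equivalence.

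First I would reduce to the glueing form: by \cref{MainResult} it suffices to work with extensions $\splitext{N}{{\pi_1}_*}{\Gl(\beta)}{\pi_2}{{\pi_2}_*}{H}$ and $\splitext{N}{{\pi_1}_*}{\Gl(\gamma)}{\pi_2}{{\pi_2}_*}{H}$. A morphism of adjoint extensions is a finite-meet preserving $f \colon \Gl(\beta) \to \Gl(\gamma)$ with $f{\pi_1}_* = {\pi_1}_*$, $\pi_2 f = \pi_2$ and $f{\pi_2}_* = {\pi_2}_*$. The first two conditions say $f(n,1) = (n,1)$ and that $f$ preserves the second coordinate, i.e.\ $f(n,h) = (g(n,h), h)$ for some map $g$. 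Since every $(n,h) \in \Gl(\beta)$ factors as $(n,h) = (n,1) \wedge (\beta(h),h) = (n,1)\wedge {\pi_2}_*(h)$ and $f$ preserves binary meets, we get $f(n,h) = f(n,1) \wedge f{\pi_2}_*(h) = (n,1) \wedge (\gamma(h),h) = (n \wedge \gamma(h), h)$. This already forces uniqueness of $f$. For existence, this formula defines a map $\Gl(\beta)\to N\times H$ landing in $\Gl(\gamma)$ automatically (the second coordinate of the value, $n\wedge\gamma(h)$, is $\le\gamma(h)$), and one checks it preserves finite meets and arbitrary joins restricted to $\Gl(\beta)$ — routine since meets and joins are componentwise. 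The only thing that can fail is that $f$ be well defined as stated, i.e.\ that the formula agree with "preserving the structure": the subtlety is that for $f{\pi_2}_*(h)=(\gamma(h),h)$ to hold we need $(\gamma(h),h)$ to actually be the image of $(\beta(h),h)$ under the componentwise meet with... — here one simply uses $\gamma\le\beta$ to know $(\gamma(h),h)\in\Gl(\beta)$ is not needed; rather $f(\beta(h),h) = (\beta(h)\wedge\gamma(h),h) = (\gamma(h),h)$, which uses precisely $\gamma\le\beta$. Conversely, if a morphism $f$ exists, then evaluating $\pi_1 f {\pi_2}_*(h) = \pi_1(\gamma(h),h) = \gamma(h)$ must equal $\pi_1(\beta(h)\wedge\gamma(h),h) = \beta(h)\wedge\gamma(h)$ only if... — more directly, from $f{\pi_2}_*={\pi_2}_*$ and $f(\beta(h),h)=(n\wedge\gamma(h),h)$ with the forced formula applied to $n=\beta(h)$ we get $\beta(h)\wedge\gamma(h)=\gamma(h)$, i.e.\ $\gamma(h)\le\beta(h)$ for all $h$, so $\gamma\le\beta$.

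Assembling: the comparison sends the adjoint extension with parameter $\beta$ to the element $\beta\in\GenExt(H,N)$, and the above shows $\Hom_{\GenExtCat}(\beta,\gamma)$ is a singleton when $\gamma\le\beta$ and empty otherwise, which is exactly $\Hom$ in the poset $\GenExt(H,N)$ with the reversed order. Functoriality (composites of the unique morphisms are the unique morphisms, identities are identities) is then automatic from uniqueness. Hence the comparison is fully faithful and essentially surjective, giving the equivalence. The main obstacle — really the only nontrivial point — is the "only if" direction of the morphism existence criterion, i.e.\ extracting $\gamma\le\beta$ from the mere existence of an adjoint-extension morphism; this is where one must exploit that the splitting ${\pi_2}_*$ is preserved (this is exactly the condition absent from plain extension morphisms, and it is what \cref{glueingorder} does \emph{not} give, as remarked before the theorem — the inclusion $\Gl(\alpha)\hookrightarrow\Gl(\beta)$ goes the "wrong way" and fails to preserve splittings, whereas its right adjoint, our $f$ above, goes from $\Gl(\beta)$ to $\Gl(\gamma)$ and does). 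Everything else is a componentwise computation in $N\times H$.
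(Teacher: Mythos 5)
Your argument is correct and is essentially the paper's own proof: the paper builds the comparison functor in the opposite direction (from the underlying poset to $\GenExtCat(H,N)$, sending $\alpha \le \beta$ to the map $(n,h) \mapsto (n \wedge \alpha(h),h)$, the right adjoint of the inclusion $\Gl(\alpha) \hookrightarrow \Gl(\beta)$) and verifies essential surjectivity, faithfulness and fullness using exactly your key computation, namely that preservation of the kernel and of the splitting forces $f(n,h) = f(n,1) \wedge f(\beta(h),h) = (n \wedge \gamma(h),h)$ and hence $\gamma \le \beta$. One harmless slip: your parenthetical claim that this $f$ preserves arbitrary joins is false in general (since $\gamma$ need not preserve joins), but it is also not needed, as morphisms of adjoint extensions in $\RFrm$ are only required to preserve finite meets and make the three squares commute.
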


\begin{proof}
Let $\alpha, \beta\colon H \to N$ be meet-semilattice homomorphisms such that $\alpha \le \beta$.
Then there is a corresponding inclusion $i\colon \Gl(\alpha) \hookrightarrow \Gl(\beta)$ as in \cref{glueingorder}. This inclusion is a frame homomorphism and thus has a right adjoint $i_*\colon \Gl(\beta) \to \Gl(\alpha)$. This adjoint sends $(n,h)$ to $(n \wedge \alpha(h),h)$ and can be seen to be a morphism of adjoint extensions. 

This procedure defines the action on morphisms of a contravariant functor from the underlying poset of $\GenExt(H,N)$ to $\GenExtCat(H,N)$. This functor is essentially surjective by \cref{MainResult} and it is automatically faithful, since $\GenExt(H, N)$ is a poset. We now show it is full.

Suppose $f$ is a morphism of adjoint extensions as given below.

\begin{center}
   \begin{tikzpicture}[node distance=2cm, auto]
    \node (A) {$N$};
    \node (B) [right of=A] {$\Gl(\beta)$};
    \node (C) [right of=B] {$H$};
    \node (D) [below of=A] {$N$};
    \node (E) [right of=D] {$\Gl(\alpha)$};
    \node (F) [right of=E] {$H$};
    \draw[normalTail->] (A) to node {$k$} (B);
    \draw[transform canvas={yshift=0.5ex},-normalHead] (B) to node {$e$} (C);
    \draw[transform canvas={yshift=-0.5ex},->] (C) to node {$e_*$} (B);
    \draw[normalTail->] (D) to node {$k'$} (E);
    \draw[transform canvas={yshift=0.5ex},-normalHead] (E) to node {$e'$} (F);
    \draw[transform canvas={yshift=-0.5ex},->] (F) to node {$e_*'$} (E);
    \draw[->] (B) to node {$f$} (E);
    \draw[double equal sign distance] (A) to (D);
    \draw[double equal sign distance] (C) to (F);
   \end{tikzpicture}
\end{center}

We know that $fe_* = e'_*$, which means $f(\beta(h),h) = (\alpha(h),h)$, and that $fk = k'$, which means $f(n,1) = (n,1)$. Since any element $(n,h) \in \Gl(\beta)$ can be written $(n,h) = (n,1) \wedge (\beta(h),h)$, we see that $f(n,h) = (n \wedge \alpha(h),h)$.

Since $f$ is in the form described above, we may conclude that it is in the image of the functor once we show that $\alpha \le \beta$. 
To see this note that $(\alpha(h),h) = f(\beta(h),h) = (\beta(h) \wedge \alpha(h), h)$.
\end{proof}

In the above proof we used the characterisation of equivalences of categories in terms of essentially surjective fully faithful functors for simplicity.
However, it also possible to construct the `inverse' equivalence explicitly without appealing to the axiom of choice.

\begin{remark}
\Cref{category_adjext_poset} shows that in contrast to the case of groups, the \emph{split short five lemma} fails in the $S$-protomodular category $\RFrm$. However, in \cite{bourn2015Sprotomodular} it is shown that the split short five lemma holds in any $S$-protomodular category. Of course, in that paper there is an additional assumption on the class $S$ which is not satisfied here.

Nonetheless, the category $\GenExtCat(H,N)$ still has a special form, since it is a preorder. In fact, this will always be the case for this more general notion of $S$-protomodular category (using the joint epicness of the kernel and the splitting).
\end{remark}

With a little more work, it is also possible to relate the order on $\GenExt(H,N)$ to the category of extensions $\ExtCat(H,N)$. 
Recall that $\ExtCat(H, N)$ is order-enriched and so we may consider adjoint morphisms in the category.

\begin{proposition}
 There is an equivalence of categories between $\GenExtCat(H,N)$
 and the category of adjunctions of $\ExtCat(H, N)$, where we take the morphisms to be in the direction of the \emph{right} adjoint.
\end{proposition}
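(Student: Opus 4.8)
The plan is to establish the equivalence by combining \cref{category_adjext_poset} with a second equivalence, between the underlying poset of $\GenExt(H,N)$ with the reverse order and the category of adjunctions of $\ExtCat(H,N)$ (right-adjoint direction). Since \cref{category_adjext_poset} already identifies $\GenExtCat(H,N)$ with the reverse poset, it suffices to show that the poset $(\GenExt(H,N), \ge)$ is equivalent to the category whose objects are extensions and whose morphisms are adjunctions in $\ExtCat(H,N)$ pointing along the right adjoint. First I would fix, via \cref{MainResult}, a canonical representative $\Gl(\alpha)$ for each isomorphism class, so that objects on both sides are meet-semilattice homomorphisms $\alpha\colon H\to N$.

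The key step is to analyse when a pair of morphisms of extensions $f\colon \Gl(\alpha)\to\Gl(\beta)$ and $g\colon \Gl(\beta)\to\Gl(\alpha)$ forms an adjunction $f\dashv g$ in the order-enriched category $\ExtCat(H,N)$. A morphism of extensions commutes with $k$ and $e$ but need not preserve splittings; the computation in the proof of \cref{category_adjext_poset} nonetheless generalises: writing $(n,h)=(n,1)\wedge(\beta(h),h)$ and using $fk=k'$, $e'f=e$, one finds that such an $f$ is determined by its value on elements of the form $(\beta(h),h)$, which must be $(\gamma(h),h)$ for some map $\gamma\le\beta$, giving $f(n,h)=(n\wedge\gamma(h),h)$ — in particular every morphism of extensions $\Gl(\beta)\to\Gl(\alpha)$ forces $\alpha\le\beta$ and is exactly the right adjoint $i_*$ appearing in \cref{category_adjext_poset} (so between any two objects there is at most one morphism in each direction). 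Consequently an adjunction in $\ExtCat(H,N)$ from $\Gl(\beta)$ (via the right adjoint) to $\Gl(\alpha)$ can exist only when both $\alpha\le\beta$ and $\beta\le\alpha$, i.e.\ $\alpha=\beta$; the unit/counit are then identities and the adjunction is trivial. This would make the category of adjunctions a \emph{discrete} category on the objects of $\ExtCat(H,N)$, which is visibly not equivalent to the non-discrete poset $(\GenExt(H,N),\ge)$.

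Since that cannot be the intended statement, the main obstacle — and the step requiring the most care — is to get the right notion of ``adjunction of $\ExtCat(H,N)$'': one should take it to mean a pair of morphisms of extensions $p\colon G'\to G$ (the right adjoint) and $q\colon G\to G'$ with $q\dashv p$ as maps in $\RFrm$ (equivalently, in the order-enrichment of the larger category), only one of which is required to lie over $\id_H$ and $\id_N$ in the strict sense, or else adjunctions between the underlying $\RFrm$-objects that are compatible with the extension data on one side only. Concretely I expect the correct reading is: objects are extensions, and a morphism from $\mathcal{E}$ to $\mathcal{E}'$ is a morphism of extensions $\mathcal{E}'\to\mathcal{E}$ together with an $\RFrm$-left-adjoint going the other way (not necessarily a morphism of extensions), so that the data is exactly a left adjoint $i\colon\Gl(\alpha)\hookrightarrow\Gl(\beta)$ with right adjoint $i_*$ a morphism of extensions. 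Under this reading \cref{glueingorder} and the proof of \cref{category_adjext_poset} already supply the functor and its properties: essential surjectivity is \cref{MainResult}, faithfulness is poset-ness of $\GenExt(H,N)$, and fullness follows because the left adjoint $i$ recovers $\alpha\le\beta$ from the inclusion $\Gl(\alpha)\hookrightarrow\Gl(\beta)$ exactly as before. I would therefore structure the proof as: (1) spell out the precise definition of the category of adjunctions of $\ExtCat(H,N)$; (2) observe that an adjunction in it is precisely an inclusion $\Gl(\alpha)\hookrightarrow\Gl(\beta)$ with $\alpha\le\beta$; (3) quote \cref{category_adjext_poset} to identify this with $\GenExtCat(H,N)$, or alternatively re-run the essentially-surjective-fully-faithful argument directly. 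The delicate part is genuinely step (1)–(2): pinning down which half of the adjunction carries the extension structure so that the resulting category is the reverse poset and not something degenerate.
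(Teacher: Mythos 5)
Your proposal goes astray at the claim that every morphism of extensions $\Gl(\beta)\to\Gl(\alpha)$ is forced to be the map $(n,h)\mapsto(n\wedge\alpha(h),h)$, so that $\alpha\le\beta$ is forced and $\ExtCat(H,N)$ has at most one morphism in each direction. The uniqueness computation in \cref{category_adjext_poset} uses the condition $fe_*=e'_*$, i.e.\ preservation of the splitting, which is precisely what a morphism of \emph{mere} extensions need not satisfy. Using only $fk=k'$ and $e'f=e$ you get $f(n,h)=(n,1)\wedge f(\beta(h),h)=(n\wedge\gamma(h),h)$, where $\gamma(h)$ is the first component of $f(\beta(h),h)$; any finite-meet preserving $\gamma\le\alpha\wedge\beta$ gives such a morphism, and nothing forces $\gamma=\alpha$. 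So $\ExtCat(H,N)$ has many parallel morphisms, it is not a preorder, and your conclusion that adjunctions in it exist only between equal objects (making the category of adjunctions discrete) is false: for $\alpha\le\beta$ the inclusion $i\colon\Gl(\alpha)\hookrightarrow\Gl(\beta)$ and $i_*\colon(n,h)\mapsto(n\wedge\alpha(h),h)$ are \emph{both} morphisms of extensions and $i\dashv i_*$. Consequently there was no need to reinterpret the statement; the intended category of adjunctions has extensions as objects and adjoint pairs $(f^*,f)$ of extension morphisms as morphisms, pointed in the direction of the right adjoint $f$, and it is not degenerate. Since your alternative reading, and your identification of its morphisms with inclusions $\Gl(\alpha)\hookrightarrow\Gl(\beta)$, both lean on the same false uniqueness claim, the argument as written has a genuine gap.

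For comparison, the paper's proof under the correct reading is short: send $(f^*,f)$ to $f$. Because $f^*$ is a morphism of extensions, $\pi_2 f^*=\pi_2'$, and taking right adjoints gives $f\pi_{2*}=\pi_{2*}'$, so the right adjoint automatically preserves the splitting and the functor lands in $\GenExtCat(H,N)$; it is bijective on objects and faithful (a right adjoint determines its left adjoint), and it is full because, by \cref{category_adjext_poset}, every morphism of adjoint extensions is of the form $i_*$, whose left adjoint $i$ is itself a morphism of extensions. Your instinct to reduce everything to \cref{category_adjext_poset} and \cref{MainResult} is reasonable, but the crucial missing step is exactly this observation that an adjunction whose two legs commute with the kernels and cokernels forces the right adjoint to preserve the splitting — not a reinterpretation of what an adjunction in $\ExtCat(H,N)$ is.
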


\begin{proof}
We claim that there is a fully faithful bijective-on-objects functor from the category of adjunctions on $\ExtCat(H,N)$ to $\GenExtCat(H,N)$, which sends each
extension to the corresponding adjoint extension and each pair of adjoints $(f,f^*)$ to the right adjoint $f$.

Suppose $f \colon G \to G'$ is a morphism of extensions and that $f$ has a left adjoint which is also a morphism of extensions, as shown in the following diagram.

\begin{center}
   \begin{tikzpicture}[node distance=2.0cm, auto]
    \node (A) {$N$};
    \node (B) [right of=A] {$G$};
    \node (C) [right of=B] {$H$};
    \node (D) [below of=A] {$N$};
    \node (E) [right of=D] {$G'$};
    \node (F) [right of=E] {$H$};
    \draw[normalTail->] (A) to node {${\pi_1}_*$} (B);
    \draw[-normalHead] (B) to node {$\pi_2$} (C);
    \draw[normalTail->] (D) to [swap] node {${\pi'_1}_*$} (E);
    \draw[-normalHead] (E) to [swap] node {$\pi'_2$} (F);
    \draw[transform canvas={xshift=-0.5ex},->] (B) to [swap] node {$f$} (E);
    \draw[transform canvas={xshift=0.5ex},->] (E) to [swap] node {$f^*$} (B);
    \draw[double equal sign distance] (A) to (D);
    \draw[double equal sign distance] (C) to (F);
   \end{tikzpicture}
\end{center}

Since $\pi_2f^* = \pi_2'$ we can take adjoints of both sides, yielding $f\pi_{2*} = \pi_{2*}'$, which in particular means that $f$ preserves the splitting. Thus, the functor described above is well defined. It clearly faithful and bijective on objects.

To see that the functor is full, recall that every morphism of adjoint extensions has a left adjoint, which is itself a morphism of extensions.
\end{proof}

\bibliographystyle{abbrv}  
\bibliography{references}

\begin{thebibliography}{10}

\bibitem{sga4vol1}
M.~Artin, A.~Grothendieck, and J.-L. Verdier.
\newblock {\em Th\'{e}orie des topos et cohomologie \'{e}tale des sch\'{e}mas.
  Tome 1: Th\'{e}orie des topos (SGA4-1)}, volume 259 of {\em Lecture Notes in
  Mathematics}.
\newblock Springer, Berlin, 1972.

\bibitem{borceux2004protomodular}
F.~Borceux and D.~Bourn.
\newblock {\em Mal'cev, protomodular, homological and semi-abelian categories},
  volume 566 of {\em Mathematics and Its Applications}.
\newblock Kluwer Academic Publishers, Dordrecht, 2004.

\bibitem{borceux2005internal}
F.~Borceux, G.~Janelidze, and G.~M. Kelly.
\newblock Internal object actions.
\newblock {\em Comment. Math. Univ. Carolin.}, 46(2):235--255, 2005.

\bibitem{borceux2005representability}
F.~Borceux, G.~Janelidze, and G.~M. Kelly.
\newblock On the representability of actions in a semi-abelian category.
\newblock {\em Theory Appl. Categ.}, 14(11):244--286, 2005.

\bibitem{bourn2015partialMaltsev}
D.~Bourn.
\newblock Partial {M}al'tsevness and partial protomodularity.
\newblock arXiv preprint
  \href{https://arxiv.org/abs/1507.02886v1}{arXiv:1507.02886v1}, 2015.

\bibitem{bourn1998protomodularity}
D.~Bourn and G.~Janelidze.
\newblock Protomodularity, descent, and semidirect products.
\newblock {\em Theory Appl. Categ.}, 4(2):37--46, 1998.

\bibitem{bourn2015Sprotomodular}
D.~Bourn, N.~Martins-Ferreira, A.~Montoli, and M.~Sobral.
\newblock Monoids and pointed {S}-protomodular categories.
\newblock {\em Homology, Homotopy Appl.}, 18:151--172, 2015.

\bibitem{bruns1970injective}
G.~Bruns and H.~Lakser.
\newblock Injective hulls of semilattices.
\newblock {\em Canad. Math. Bull.}, 13(1):115--118, 1970.

\bibitem{faul2019weaklySchreier}
P.~F. Faul.
\newblock A characterization of weakly {S}chreier extensions of monoids.
\newblock arXiv preprint
  \href{https://arxiv.org/abs/1911.02630}{arXiv:1911.02630}, 2019.

\bibitem{martins2013semidirect}
N.~Martins-Ferreira, A.~Montoli, and M.~Sobral.
\newblock Semidirect products and crossed modules in monoids with operations.
\newblock {\em J. Pure Appl. Algebra}, 217(2):334--347, 2013.

\bibitem{niefield2012glueing}
S.~Niefield.
\newblock The glueing construction and double categories.
\newblock {\em J. Pure Appl. Algebra}, 216(8--9):1827--1836, 2012.

\bibitem{niefield1981cartesian}
S.~B. Niefield.
\newblock Cartesian inclusion: locales and toposes.
\newblock {\em Comm. Algebra}, 9(16):1639--1671, 1981.

\bibitem{wraith1974glueing}
G.~Wraith.
\newblock Artin glueing.
\newblock {\em J. Pure Appl. Algebra}, 4(3):345--348, 1974.

\end{thebibliography}

\end{document}